\numberwithin{equation}{section}
\newtheorem{theorem}{Theorem}[section]
\newtheorem{corollary}[theorem]{Corollary}
\newtheorem{lemma}{Lemma}[section]
\newtheorem{remark}{Remark}[section]
\begin{document}
\title{Analytical expressions of copositivity for 4th order symmetric tensors and applications}%
	
\author{Yisheng Song\thanks{School of Mathematics and Information Science  and Henan Engineering Laboratory for Big Data Statistical Analysis and Optimal Control,
			Henan Normal University, XinXiang HeNan,  P.R. China, 453007.
			Email: songyisheng@htu.cn. This author's work was supported by
			the National Natural Science Foundation of P.R. China (Grant No.
			11571095, 11601134). His work was partially done when he was
			visiting The Hong Kong Polytechnic University.}, \quad Liqun Qi\thanks{Department of Mathematics, School of Science, Hangzhou Dianzi University, Hangzhou
310018, People's Republic of China; Department of Applied Mathematics, The Hong Kong Polytechnic University, Hung Hom,
			Kowloon, Hong Kong. Email: maqilq@polyu.edu.hk. This author's work was supported by the Hong
			Kong Research Grant Council (Grant No.  PolyU 15300715, 15301716 and 15300717)}}
	\date{\today}
	
	\maketitle
	%
	\vskip 4mm
\begin{quote}{\bf Abstract.}\ In particle physics, scalar potentials have to be bounded
	from below in order for the physics to make sense. The precise expressions of checking lower bound of scalar potentials are essential, which is an analytical expression of checking copositivity and positive definiteness of tensors given by such scalar potentials. Because the tensors given by general scalar potential are 4th order and symmetric, our work mainly focuses on finding precise expressions to test copositivity and positive definiteness of 4th order tensors in this paper. First of all, an analytically sufficient and necessary condition of positive definiteness is provided  for 4th order 2 dimensional symmetric tensors. For 4th order 3 dimensional symmetric tensors, we give two analytically sufficient conditions of (strictly) cpositivity by using proof technique of reducing orders or dimensions of such a tensor. Furthermore, an analytically sufficient and necessary condition of copositivity is showed for 4th order 2 dimensional symmetric tensors. We also give several distinctly analytically sufficient conditions of (strict) copositivity for 4th order 2 dimensional symmetric tensors. Finally, we apply these results to check  lower bound of scalar potentials, and to present analytical vacuum stability conditions for potentials of two real scalar fields and the Higgs boson.
	\vskip 2mm
		
	{\bf Key Words and Phrases:} Copositive Tensors, Positive definiteness, Homogeneous polynomial, Analytical expression.\vskip 2mm
		
	{\bf 2010 AMS Subject Classification:} 15A18, 15A69, 90C20, 90C30
\end{quote}
	\vskip 8mm

	\section{\bf Introduction}\label{}
	
	Recently, Kannike \cite{K2016,K2018} studied the vacuum stability of general scalar potentials
	of a few fields.  The most general scalar potential of $n$ real singlet scalar fields $\phi_i$
	($i=1,2,\cdots,n$) can be expressed as
	\begin{equation}\label{eq:1} V(\phi) =\sum_{i,j,k,l=1}^n \lambda_{ijkl}\phi_i\phi_j\phi_k\phi_l=\Lambda \phi^4, \end{equation}
	where $\Lambda=(\lambda_{ijkl})$ is the symmetric tensor of scalar couplings and $\phi=(\phi_1,\phi_2,\cdots,\phi_n)^\top$ is the vector of
	fields.  So, the vacuum stability of such a system is equivalent to the positivity of the polynomial \eqref{eq:1} \cite{K2016}, i.e., the positive definiteness of the tensor $\Lambda=(\lambda_{ijkl})$.  However, it is NP-hard to determine the non-negativity of a given polynomial if the degree of such a polynomial is larger than or equal to 4 \cite{HL2013,MK1987}.
A significant special case \cite{K2012}, the quartic potentials of quadratic scalar fields $\phi_i^2$
	($i=1,2,\cdots,n$) is presented by
	\begin{equation}\label{eq:2} V(\phi) =\sum_{i,j=1}^n \lambda_{ij}\phi_i^2\phi_j^2=(\phi_1^2,\phi_2^2,\cdots,\phi_n^2)^\top A(\phi_1^2,\phi_2^2,\cdots,\phi_n^2),\end{equation}
	where $A=(\lambda_{ij})$ is a symmetric matrix. Then the positivity of the polynomial \eqref{eq:2} become the strict copositivity of matrix $A$. In 2012, Kannike \cite{K2012} first obtained the vacuum stability conditions of such a special case  by means of testing copositivity of matrix. The vacuum stability conditions of the general potential of two real scalars (without or with the Higgs boson included in the potential) were obtained  in \cite{K2016,K2018} with the help of the copositivity of matrix and the positivity of the polynomial.
	
The concept of copositive matrix was introduced by Motzkin \cite{TSM} in 1952. A real symmetric matrix $A$ is said to be (i) {\em copositive} if $x^TAx\geq0$ for all vector $x\geq 0$ in the non-negative orthant $\mathbb{R}^n_+$ ($x\geq 0$ implies that $x_i\geq0$ for each $i=1,2,\cdots,n$); (ii) {\em strictly copositive} if $x^TAx>0$ for all nonzero vector $x\geq 0$.  Hadeler \cite{H1983} and Nadler \cite{N1992} showed the copositive conditions of an $2\times 2$ matrix $A$ (also see Andersson-Chang-Elfving \cite{ACE}).  A real symmetric
$2\times 2$ matrix $A=(a_{ij})$ is (strictly) copositive  if and only if
$$a_{11}\geq 0 (>0), a_{22}\geq 0 (>0), a_{12}+\sqrt{a_{11}a_{22}}\geq 0 (>0).$$ The copositive conditions of an $3\times 3$ matrix $A$ were obtained by
Hadeler \cite{H1983} and  Chang-Sederberg \cite{CS1994}. A real symmetric
$3\times 3$ matrix $A=(a_{ij})$ is  copositive  if and only if
$$\begin{aligned}
a_{11}\geq 0, a_{22}\geq 0, a_{33}\geq 0,\\
 \alpha=a_{12}+\sqrt{a_{11}a_{22}}\geq 0, \beta=a_{13}+\sqrt{a_{11}a_{33}}\geq 0, \gamma=a_{23}+\sqrt{a_{33}a_{22}}\geq 0,\\
 a_{12}\sqrt{a_{33}}+a_{13}\sqrt{a_{22}}+a_{23}\sqrt{a_{11}}+\sqrt{a_{11}a_{22}a_{33}}+\sqrt{2\alpha\beta\gamma}\geq 0.
\end{aligned}$$
Ping-Yu \cite{PY1993} gave the criteria of  $4\times 4$ copositive matrices, which expression is not simpler than the above. At the same time, they proved an equivalent condition of $n\times n$ copositive matrices. Cottle-Habetler-Lemke \cite{CHL} presented analytical conditions of copositive matrix by means of the determinant and the adjugate of such a matrix. V${\ddot{a}}$li${\dot{a}}$ho \cite{HV} discussed the criteria of (strictly) copositive matrices with the help of some behaviors of its
principal submatrices. Kaplan \cite{WK} proved a way to test copositivity of a matrix by using eigenvalues and eigenvectors of its principal submatrices. Haynsworth-Hoffman \cite{HH} showed the Perron  properties of a class of copositive matrices. Johnson-Reams \cite{JR} dicussed spectral theory of copositive matrices. For more copositive properties and their applications such as copositive programs, see \cite{B09,B12,BD11,DM} and the relevant literature on this topic.
	
Recently, Kannike \cite{K2016,K2018} gave another physical example defined by scalar dark matter stable
under a $\mathbb{Z}_3$ discrete group. The most general scalar quartic
potential of the {\bf SM} Higgs $\mathbf{H}_1$, an inert doublet $\mathbf{H}_2$ and a
complex singlet $\mathbf{S}$ is
\begin{align}
V(h_1,h_2,s)=V(\phi)
=   \mathcal{V}\phi^4=&\sum_{i,j,k,l=1}^3 v_{ijkl}\phi_i\phi_j\phi_k\phi_l,\label{eq:12}
\end{align}
where $\phi=(\phi_1,\phi_2,\phi_3)^\top=(h_1,h_2,s)^\top$, $
h_1=|H_1|,\ h_2=|H_2|, H_2^{\dagger} H_1=h_1h_2\rho e^{i\phi}, S=se^{i\phi_S},$
 $\mathcal{V}=(v_{ijkl})$ is an 4th order 3 dimensional real symmetric tensor with its entries:
$v_{1111}=\lambda_1,\ v_{2222}=\lambda_2,\ v_{3333}=\lambda_S,
v_{1122}=\frac16(\lambda_3+\lambda_4\rho^2),\ v_{1133}=\frac16\lambda_{S1},\ v_{2233}=\frac16\lambda_{S2},
 v_{1233}=-\frac1{12}|\lambda_{S12}|\rho,\ v_{ijkl}=0 \mbox{ for the others}.$  Clearly, $h_1\geq0, h_2\geq0, s\geq0.$   So,  the vacuum stability of for $\mathbb{Z}_3$ scalar dark matter $V(h_1,h_2,s)$ is really equivalent to the (strict) copositivity of the tensor $\mathcal{V}=(v_{ijkl})$ (\cite{K2016,K2018}).

An $m$th order $n$ dimensional real symmetric tensor $\mathcal{A}$ is said to be \begin{itemize}
		\item[(i)] {\em copositive } if $\mathcal{A}x^m=x^T(\mathcal{A}x^{m-1})=\sum\limits_{i_1,i_2,\cdots,i_m=1}^na_{i_1i_2\cdots
		i_m}x_{i_1}x_{i_2}\cdots
	x_{i_m}\geq0$ for all $x\in \mathbb{R}^n_+$;
\item[(ii)] {\em strictly copositive} if  $\mathcal{A}x^m>0$ for all $x\in \mathbb{R}^n_+\setminus\{0\}$;
\item[(iii)] {\em semipositive definite} if  $\mathcal{A}x^m\geq0$ for all $x\in \mathbb{R}^n$ and even number $m$;
		\item[(iv)] {\em positive definite} if  $\mathcal{A}x^m>0$ for all $x\in \mathbb{R}^n\setminus\{0\}$ and even number $m$.\end{itemize}
 These concepts  were first introduced by Qi \cite{LQ1,LQ5} for higher order symmetric tensors. Qi \cite{LQ1} showed a even order  symmetric tensor is positive definitive if and only if its all H-(Z-)eigenvalues are positive. Qi \cite{LQ5} proved a symmetric tensor is strictly copositive if its each sum of the main diagonal element and negative elements in the same row is positive. Song-Qi \cite{SQ2015} extended Kaplan's test way of copositive matrix \cite{WK}  to copositive tensors, and presented some structured properties of such a class of tensors. Recently, checking copositivity of tensors has attracted the attention of  mathematical workers.  For example, Chen-Huang-Qi \cite{CHQ2017} studied some basic theory of copositivity detection of symmetric tensors and gave corresponding numerical algorithms of testing copositivity based on the standard simplex and simplicial partitions; Chen-Huang-Qi \cite{CHQ2018} revised  algorithm with a proper convex subcone of the copositive tensor cone; Nie-Yang-Zhang \cite{NYZ2018} proposed a complete semidefinite relaxation algorithm for detecting the copositivity of a symmetric tensor and showed such a detection can be done by solving a finite number of semidefinite relaxations for all tensors; Li-Zhang-Huang-Qi \cite{LZHQ2019} presented an SDP relaxation algorithm  to test the copositivity of higher order tensors. For more structured properties and numerical algorithms of copositive tensors, see \cite{CW2018, QCC2018, QL2017}.

 On the other hand, some structured tensors are closely tied to strictly copositive tensors. Song-Qi \cite{S-Q2016}  analyzed qualitatively the relationship between the constrained minimization problem on the unit sphere and (strict) copositvity of corresponding tensors. Song-Qi \cite{SQ2016} proved that a symmetric tensor is (strictly) copositive if and only if it is (strictly) semipositive. A tensor is called (strictly) semipositive if for each nonzero vector $x=(x_1,x_2,\cdots,x_n)^\top\geq0$, there exists an index $k\in \{1,2,\cdots,n\}$ such that $$x_k>0\mbox{ and }\left(\mathcal{A} x^{m-1}\right)_k=\sum_{i_2,\cdots,i_m=1}^na_{ki_2\cdots i_m}x_{i_2}\cdots x_{i_m}\geq0(>0).$$ This notion is firstly used by Song-Qi \cite{SQ2017}.  In particular, this class of tensors assure the solvability of the corresponding tensor complementarity problems (for short, TCP). So, we may probe into checking copositivity of tensors and its applications by means of studying this class of semipositive tensors. For its more properties and applications in TCP, see \cite{BHW2016,BP2018,CQS2018,CQW2016,CQW2019,DLQ2018,G2017,HQ2017,LQX2017,SY2016,S-Q2017,SM2018,WCW2018,WHB2016,WHH2018} and references cited therein.\\
	
	Until  now, there is no an analytical expression of checking copositivity and positive definitieness of tensors like ones of $2\times2$ and $3\times3$ matrices. However, the practical matters such as the vacuum stability of general scalar potentials of a few fields require precise expressions. \\

Motivated by these works above, we study the analytical expressions of certifying a symmetric tensor to be copositive and positive definitive in this paper. At the same time, we confine our work  to 4th order tensor in this paper since the tensor given by general scalar potential is 4th order. More precisely,  we provide respectively analytical expressions of testing copositivity and positive definiteness for 4th order 3 (or 2) dimensional symmetric tensors. In particular, we will employ argumentation technique of reducing orders or dimensions of such a tensor to establish the desired conclusions, which may be a very important method of analysing higher order tensors in future. Furthermore, these results can be applied to check the vacuum stability of general scalar potentials of two real singlet scalar fields and vacuum stability for $\mathbb{Z}_3$ scalar dark matter.
	
	\section{\bf Preliminaries and Basic facts}
	
An 4th order $3$ dimensional real tensor $\mathcal{A}$ consists of $81$ entries in the real field $\mathbb{R}$, i.e.,
	$$\mathcal{A} = (a_{ijkl}),\ \ \ \ \  a_{ijkl} \in \mathbb{R},\ \  i,j,k,l=1,2,3.$$
 Let the transposition of a vector $x$ be  denoted by $x^\top$. For $x = (x_1, x_2,x_3)^\top\in \mathbb{R}^3$,  $\mathcal{A}x^3$ is a vector in $\mathbb{R}^3$,
	\begin{equation}\label{eq:21}\mathcal{A}x^3=\left(\sum_{j,k,l=1}^3a_{1jkl}x_jx_k x_l,\sum_{j,k,l=1}^3a_{2jkl}x_jx_k x_l,\sum_{j,k,l=1}^3a_{3jkl}x_jx_k x_l\right)^\top.\end{equation}
Then $x^\top(\mathcal{A}x^3)$ is a homogeneous polynomial, denoted
	as $\mathcal{A}x^4$, i.e.,
	\begin{equation}\label{eq:22}\mathcal{A}x^4=x^\top(\mathcal{A}x^3)=\sum_{i,j,k,l=1}^3a_{ijkl}x_ix_jx_k
	x_l\index{$\mathcal{A}x^m$}.\end{equation}
Similarly, an 4th order $2$ dimensional real tensor $\mathcal{A}$ consists of $16$ entries in the real field $\mathbb{R}$ and for $x = (x_1, x_2)^\top\in \mathbb{R}^2$,
\begin{equation}\label{eq:2-2}\mathcal{A}x^4=x^\top(\mathcal{A}x^3)=\sum_{i,j,k,l=1}^2a_{ijkl}x_ix_jx_k
	x_l.\end{equation}
 A  tensor $\mathcal{A}$ is said to be {\em symmetric} if its entries $a_{ijkl}$ are invariant for any permutation of its indices. Obviously,   each 4th order 2 dimensional symmetric tensor $\mathcal{A}$ determines a homogeneous polynomial $\mathcal{A}x^4$ of degree 4 with 2 variables and vice versa.

Let $\|\cdot\|$ denote any norm on $\mathbb{R}^n$. Then the equivalent definition of (strict) copositivity and semipositive (positive) definiteness of a symmetric tensor in the sense of any norm on $\mathbb{R}^n$ \cite{SQ2015,LQ1,QL2017,QCC2018}.

	\begin{lemma}(\cite{SQ2015,LQ1}) \label{le:21} Let $\mathcal{A}$ be a symmetric tensor of order 4. Then
		\begin{itemize}
			\item[(i)] $\mathcal{A}$ is copositive if and only if  $\mathcal{A}x^4\geq0$ for all $x\in \mathbb{R}^n_+$ with $\|x\|=1$;
			\item[(ii)] $\mathcal{A}$ is strictly copositive if and only if $\mathcal{A}x^4>0$ for all $x\in \mathbb{R}^n_+$ with $\|x\|=1$;
			\item[(iii)] $\mathcal{A}$ is semipositive definite if and only if $\mathcal{A}x^4\geq0$ for all $x\in \mathbb{R}^n$ with $\|x\|=1$;
            \item[(iv)] $\mathcal{A}$ is positive definite if and only if $\mathcal{A}x^4>0$ for all $x\in \mathbb{R}^n$ with $\|x\|=1$.
		\end{itemize}
	\end{lemma}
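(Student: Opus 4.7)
The plan is to prove all four equivalences by exploiting the fact that $\mathcal{A}x^4$ is a homogeneous polynomial of degree $4$, so that $\mathcal{A}(tx)^4 = t^4\,\mathcal{A}x^4$ for every scalar $t\ge 0$, and that both $\mathbb{R}^n_+$ and $\mathbb{R}^n$ are cones (closed under non-negative scalar multiplication). Since all four parts follow the same template, I would write a single unified argument and then note that the strict versus non-strict and the positive-orthant versus full-space versions are obtained by changing the ambient set and the inequality symbol.

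The ``only if'' directions are immediate: if $\mathcal{A}x^4\ge 0$ (respectively $>0$) for all $x$ in $\mathbb{R}^n_+$ (respectively $\mathbb{R}^n_+\setminus\{0\}$, $\mathbb{R}^n$, $\mathbb{R}^n\setminus\{0\}$), then in particular this inequality holds on the subset where $\|x\|=1$. Nothing beyond specialization is needed.

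For the ``if'' directions I would argue as follows. Fix $x\ne 0$ in the relevant set, set $t=\|x\|>0$ and $y=x/t$, so that $\|y\|=1$ and $y$ still lies in the same cone (since $\mathbb{R}^n_+$ and $\mathbb{R}^n$ are closed under multiplication by $1/t>0$). By homogeneity,
\begin{equation*}
\mathcal{A}x^4 \;=\; \mathcal{A}(ty)^4 \;=\; t^4\,\mathcal{A}y^4 \;=\; \|x\|^4\,\mathcal{A}y^4.
\end{equation*}
The hypothesis gives $\mathcal{A}y^4\ge 0$ (respectively $>0$), and since $\|x\|^4>0$, the same sign is inherited by $\mathcal{A}x^4$. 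For parts (i) and (iii) it only remains to handle $x=0$, where $\mathcal{A}x^4=0\ge 0$ trivially; for (ii) and (iv) the zero vector is excluded from the defining condition, so this step is not needed.

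There is no real obstacle: the only thing to be careful about is matching the quantifier ranges in each of the four cases (whether $x=0$ is included and whether the inequality is strict), and noting that in the strict copositive case (ii) the assumption $x\ne 0$ in $\mathbb{R}^n_+$ guarantees $\|x\|>0$ so that the normalization step is legitimate. The argument uses only positive homogeneity of degree $4$ and the fact that $\mathbb{R}^n_+$ and $\mathbb{R}^n$ are invariant under positive scaling, both of which are elementary.
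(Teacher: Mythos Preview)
Your argument is correct and is precisely the standard homogeneity/normalization proof. Note that the paper itself does not supply a proof of this lemma: it is quoted from \cite{SQ2015,LQ1} as a preliminary fact, so there is no in-paper proof to compare against, but the elementary scaling argument you give is exactly what underlies the cited results.
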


A quadratic Bernstein-Bezier polynomial $p(t)$ on the interval $[0,1]$ is given by
\begin{equation}\label{eq:23}p(t)=at^2+2b(1-t)t+c(1-t)^2,\ t\in[0,1],\end{equation}
Nadler \cite{N1992} and Andersson-Chang-Elfving \cite{ACE} showed the following famous conclusion, independently.
\begin{lemma}(\cite[Lemma 1]{N1992},\cite[Lemma 2.1]{ACE}) \label{le:22} Let a quadratic Bernstein-Bezier polynomial $p(t)$ be defined by \eqref{eq:23}. Then $p(t)\geq0$ $(>0)$ for all $t\in[0,1]$ if and only if the inequalities
\begin{equation}\label{eq:24}a\geq0 (>0),\ c\geq0 (>0),\ b+\sqrt{ac}\geq0 (>0)\end{equation}
hold simultaneously.
			\end{lemma}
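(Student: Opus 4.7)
The plan is to exploit the identity $p(t) = at^2 + 2bt(1-t) + c(1-t)^2$ as a quadratic form in the pair $(t, 1-t)$, whose coordinates are both nonnegative on $[0,1]$. I will prove the non-strict case in full and then indicate the (small) modifications needed for the strict case.

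For the necessity direction, I would begin by evaluating $p$ at the endpoints: $p(1) = a$ and $p(0) = c$ force the first two inequalities immediately. For the third, I would dispose of the easy subcase $b \geq 0$ (where $b + \sqrt{ac} \geq 0$ is automatic) and concentrate on $b < 0$. In that regime I expand $p$ as an ordinary quadratic in $t$, locate the unique critical point $t^{*} = (c-b)/(a-2b+c)$, and verify via a short sign check (using $b<0$, $a,c\geq 0$) that $t^{*}\in(0,1)$ and that the denominator is strictly positive. The minimum value then collapses to $(ac-b^2)/(a-2b+c)$, whose nonnegativity yields $ac \geq b^2$, equivalently $\sqrt{ac} \geq -b$.

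For the sufficiency direction, I again split on the sign of $b$. If $b \geq 0$, each summand of $p(t)$ is a product of nonnegative quantities on $[0,1]$, so $p \geq 0$ is immediate. If $b < 0$, the hypothesis $b+\sqrt{ac}\geq 0$ forces $ac \geq b^2$ and in particular $a > 0$, which permits the sum-of-squares decomposition
\begin{equation*}
p(t) = a\Bigl(t + \tfrac{b}{a}(1-t)\Bigr)^{2} + \Bigl(c - \tfrac{b^{2}}{a}\Bigr)(1-t)^{2},
\end{equation*}
exhibiting $p$ manifestly as a sum of two nonnegative terms.

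The strict case needs only bookkeeping rather than a new idea. The endpoint evaluations upgrade to $a,c>0$; the necessity minimum computation gives the strict inequality $ac > b^{2}$ whenever $b\leq 0$, so $b+\sqrt{ac}>0$; and in the sufficiency step for $b<0$, the decomposition has strictly positive coefficient $c - b^{2}/a > 0$, so $p(t)=0$ would force $(1-t)^{2}=0$ and simultaneously $a t^{2}=0$, impossible since $a>0$. For $b\geq 0$ strict, a similar observation (that $at^{2}$ and $c(1-t)^{2}$ cannot both vanish on $[0,1]$ when $a,c>0$) closes the argument. The only mildly delicate point I foresee is confirming $t^{*}\in(0,1)$ in the necessity step, but this reduces to the inequalities $c-b>0$ and $a-b>0$ under the sign hypotheses, so no real obstacle should arise.
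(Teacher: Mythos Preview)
Your argument is correct. The paper does not supply its own proof of this lemma: it is quoted verbatim from Nadler~\cite{N1992} and Andersson--Chang--Elfving~\cite{ACE} as a preliminary fact, with no demonstration given. So there is nothing in the paper to compare against, and your self-contained proof fills that gap cleanly.

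One small expository slip worth tightening: in the strict sufficiency case with $b<0$, you write that $p(t)=0$ would force ``$(1-t)^2=0$ and simultaneously $at^2=0$.'' The first squared term in your decomposition is $a\bigl(t+\tfrac{b}{a}(1-t)\bigr)^2$, not $at^2$; once $(1-t)=0$ that term equals $a>0$, which already gives the contradiction. The conclusion is unchanged, but the phrasing as written does not match the decomposition you just displayed.
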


For a quartic and univariate polynomial $f(t)$ with real coefficients,
\begin{equation}\label{eq:25}f(t)=a_0t^4+4a_1t^3+6a_2t^2+4a_3t+a_4,\end{equation}
Gadenz-Li \cite{GL1964}, Ku \cite{K1965}, Jury-Mansor \cite{JM1981} obtained independently its positive conditions.

\begin{lemma}(\cite{GL1964,K1965,JM1981}) \label{le:23} Let $f(t)$ be a quartic and univariate polynomial defined by \eqref{eq:25} with $a_0>0$ and $a_4>0$. Define
$$\begin{aligned}
F&=9a_0^2a_2^2-24a_0a_1^2a_2+12a_1^4-a_0^3a_4+4a_0^2a_1a_3,\\
G&=a_0^2a_3-3a_0a_1a_2+2a_1^3,\\
H&=a_0a_2-a_1^2,\\
I&=a_0a_4-4a_1a_3+3a_2^2,\\
J&=a_0a_2a_4+2a_1a_2a_3-a_2^3-a_0a_3^2-a_1^2a_4,\\
\Delta&=I^3-27J^2.
\end{aligned}$$
Then $f(t)>0$ for all $0<|t|<\infty$ if and only if
\begin{itemize}
	\item[(1)]\ \ \ $ \Delta >0,\ \ \ H\geq0;$
	\item[(2)]\ \ \ $ \Delta >0,\ \ \ H<0, \ \ F<0;$
	\item[(3)]\ \ \ $ \Delta=0,\ \ \ H>0, \ \ \ F=0,\ \ \ G=0.$
\end{itemize}
\end{lemma}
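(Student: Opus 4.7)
The plan is to reduce the positivity question to counting real roots, and then apply classical invariant theory for a binary quartic to classify those roots via the quantities $\Delta$, $H$, $F$, $G$.

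First, I would observe that $a_0>0$ forces $f(t)\to+\infty$ as $|t|\to\infty$, while $a_4=f(0)>0$. Consequently $f(t)>0$ for all $0<|t|<\infty$ is equivalent to $f$ having no real roots on $\mathbb{R}$. So the statement reduces to an algebraic criterion characterizing quartics with $a_0,a_4>0$ that have no real zeros.

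Next I would bring in the classical discriminant $\Delta=I^3-27J^2$, which up to a nonzero factor equals the discriminant of $f$ in \eqref{eq:25}. Its sign separates three standard cases: $\Delta>0$ forces the roots to be either all real and distinct or two complex conjugate pairs; $\Delta<0$ gives exactly two real roots and one conjugate pair; and $\Delta=0$ signals multiple roots. The case $\Delta<0$ is incompatible with the absence of real roots, so $\Delta\geq0$ is immediately necessary, and the three items in the lemma cover precisely the admissible sub-cases of $\Delta\geq 0$.

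Third, I would perform the translation $t\mapsto t-a_1/a_0$ to put $f$ in depressed form $a_0 s^4+c_2 s^2+c_3 s+c_4$, whose coefficients can be expressed through $H$, $G$, and $I$ (indeed $H$ is, up to scaling, the coefficient of $s^2$, and $G$ is the coefficient of $s$). In the regime $\Delta>0$ the resolvent cubic of the depressed form has three real roots, and a sign analysis of those roots, phrased through the semi-invariant $H$ together with the auxiliary $F$, distinguishes ``all four roots complex'' from ``all four roots real''. This produces case $(1)$ (when $H\geq0$, the coefficient pattern rules out four real roots) and case $(2)$ (when $H<0$ the extra condition $F<0$ is what prevents the four-real-root configuration). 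In the boundary regime $\Delta=0$, avoiding real roots forces the multiple root to belong to a pair of complex conjugate double roots, which is singled out by $H>0$, $F=0$ and $G=0$, yielding case $(3)$.

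The main obstacle will be the third step: carefully pinning down which sign combinations on the seminvariants $H$ and $F$ distinguish the ``all complex'' branch from the ``all real'' branch when $\Delta>0$, and matching the degenerate case $\Delta=0$ to precisely the complex-conjugate double root configuration. This is essentially bookkeeping with the classical covariants of the binary quartic and cross-checking them against the explicit factorization of the depressed quartic through its resolvent cubic; the identities $I=12H^2/a_0^2+\cdots$ and the expression of $F$ in terms of the resolvent's roots are what tie the formal invariants to the geometric root picture.
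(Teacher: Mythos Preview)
The paper does not prove this lemma; it is quoted from the literature \cite{GL1964,K1965,JM1981} and used as a black box in the proof of Theorem~\ref{th:31}. So there is no in-paper argument to compare your proposal against.

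That said, your outline is the standard classical route and is sound in its broad strokes. The reduction ``$f>0$ on $\mathbb{R}\setminus\{0\}$ $\Leftrightarrow$ $f$ has no real zeros'' is correct because $a_0>0$ and $f(0)=a_4>0$; the invariants $I,J$ and the discriminant $\Delta=I^3-27J^2$ are exactly the classical invariants of the binary quartic, and $H,G,F$ are seminvariants arising from the Tschirnhaus shift $t\mapsto t-a_1/a_0$ (with $H$ proportional to the depressed quadratic coefficient and $G$ to the depressed cubic coefficient). The trichotomy on the sign of $\Delta$ and the use of $H,F$ to separate the ``four real roots'' branch from the ``two conjugate pairs'' branch when $\Delta>0$, together with the identification of the $\Delta=0$ case with a pair of complex conjugate double roots via $H>0$, $F=0$, $G=0$, is precisely how the cited references organize the argument. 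The only caveat is that your proposal stays at the level of a sketch: the actual verification that $(H\geq0)$, respectively $(H<0,\ F<0)$, picks out the all-complex branch when $\Delta>0$, and that $(H>0,\ F=0,\ G=0)$ captures exactly the complex double-root degeneration, requires writing down the resolvent cubic explicitly and tracking signs, which is routine but not entirely trivial bookkeeping.
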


For a quartic and univariate polynomial $g(t)$ with real coefficients,
\begin{equation}\label{eq:26}g(t)=at^4+bt^3+ct^2+dt+e,\end{equation}
Ulrich-Watson \cite{UW1994} proved its nonnegative conditions for $t>0$.

\begin{lemma}(\cite[Theorem 2]{UW1994}) \label{le:24} Let $g(t)$ be a quartic and univariate polynomial defined by \eqref{eq:26} with $a>0$ and $e>0$. Define
$$\begin{aligned}
\alpha &=ba^{-\frac34}e^{-\frac14},\ \beta=ca^{-\frac12}e^{-\frac12},\ \gamma=da^{-\frac14}e^{-\frac34},\\
\Delta&=4(\beta^2-3\alpha\gamma+12)^3-(72\beta+9\alpha\beta\gamma-2\beta^3-27\alpha^2-27\gamma^2)^2,\\
\mu &=(\alpha-\gamma)^2-16(\alpha+\beta+\gamma+2),\\
\eta&=(\alpha-\gamma)^2-\frac{4(\beta+2)}{\sqrt{\beta-2}}(\alpha+\gamma+4\sqrt{\beta-2}).
\end{aligned}$$
Then (i) $g(t)\geq0$ for all $t>0$ if and only if
\begin{itemize}
	\item[(1)]\ \ \ $\beta<-2\ \mbox{ and }\ \Delta \leq0\ \mbox{ and }\ \alpha+\gamma>0;$
	\item[(2)]\ \ \ $-2\leq \beta\leq 6\ \mbox{ and } \begin{cases} \Delta\leq 0\ &\mbox{ and }\ \alpha+\gamma>0\\ & or \\ \Delta\geq 0\ &\mbox{ and }\ \mu\leq 0\end{cases}$\\
	\item[(3)]\ \ \ $\beta>6\ \mbox{ and } \begin{cases} \Delta\leq 0\ &\mbox{ and }\ \alpha+\gamma>0\\ & or \\ \alpha>0\ &\mbox{ and }\ \gamma>0\\ & or  \\ \Delta\geq 0\ &\mbox{ and }\eta\leq 0.\end{cases}$
\end{itemize}
(ii) $g(t)\geq0$ for all $t>0$ if
\begin{itemize}
	\item[(1)]\ \ \ \ $\alpha>-\frac{\beta+2}{2} \mbox{ and }\gamma>-\frac{\beta+2}{2} \mbox{ for }\beta\leq6;$
\item[(2)]\ \ \ \ $\alpha>-2\sqrt{\beta-2} \mbox{ and }\gamma>-2\sqrt{\beta-2} \mbox{ for }\beta>6.$
\end{itemize}
\end{lemma}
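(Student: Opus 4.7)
The plan is to reduce the general quartic $g(t)$ to a one-parameter canonical family by a scaling substitution, and then classify nonnegativity on $(0,\infty)$ via the quartic discriminant combined with a resolvent-cubic case split. First I would substitute $t=(e/a)^{1/4}s$ and divide through by $e>0$, which reduces the question to whether the monic quartic $h(s):=s^4+\alpha s^3+\beta s^2+\gamma s+1$ is nonnegative on $(0,\infty)$; the definitions of $\alpha,\beta,\gamma$ in the lemma are precisely what makes this rescaling dimensionless, and the normalization is invariant under $s\mapsto 1/s$ (which swaps $\alpha$ and $\gamma$), explaining the symmetry of the conditions throughout. A direct calculation shows that $\Delta$ in the lemma is a positive multiple of the classical discriminant of $h$, so its sign distinguishes the three possible real-root configurations: all nonreal or four distinct real roots ($\Delta>0$), a repeated real root ($\Delta=0$), or two real plus two complex-conjugate roots ($\Delta<0$).

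For part~(i), since $h(0)=1>0$ and $h(s)\to+\infty$, $h\geq0$ on $(0,\infty)$ fails precisely when $h$ has a simple positive real root. I would factor $h$ over $\mathbb{R}$ as $(s^2+p_1s+q_1)(s^2+p_2s+q_2)$ with $q_1q_2=1$, apply the $(0,\infty)$-analogue of Lemma~\ref{le:22} (obtained from it by the substitution $s=t/(1-t)$) to each factor, and translate the resulting inequalities back into $(\alpha,\beta,\gamma)$-variables. The case split on $\beta$ emerges from the resolvent cubic: the thresholds $\beta=-2$ and $\beta=6$ are the values at which the structure of the admissible real factorization changes, corresponding respectively to $h$ developing a double root at $s=1$ in the low- and high-curvature regimes. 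The auxiliary quantities $\mu$ and $\eta$ arise as the discriminants that detect such positive tangencies in their respective regimes.

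Part~(ii) admits clean positivity certificates derived from two explicit identities. For $\beta\leq6$, one verifies
\[
h(s)=(s-1)^2\!\left(s^2+\tfrac{2-\beta}{2}s+1\right)+\bigl(\alpha+\tfrac{\beta+2}{2}\bigr)s^3+\bigl(\gamma+\tfrac{\beta+2}{2}\bigr)s,
\]
whose quadratic factor has discriminant $\tfrac{1}{4}(\beta+2)(\beta-6)$, nonpositive for $-2\leq\beta\leq6$ (making the factor globally nonnegative) and positive for $\beta<-2$ but with both roots negative (hence the factor remains nonnegative on $(0,\infty)$); the two hypotheses of (ii)(1) then force $h\geq0$ on $(0,\infty)$ immediately. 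For $\beta>6$, setting $\lambda=\sqrt{\beta-2}$ yields
\[
h(s)=(s^2-\lambda s+1)^2+(\alpha+2\lambda)s^3+(\gamma+2\lambda)s,
\]
manifestly nonnegative on $s>0$ under the hypotheses of (ii)(2).

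The main obstacle will be the $\beta>6$ branch of part~(i), which presents three geometrically distinct disjoint sufficient pieces reflecting three separate mechanisms by which $h$ can avoid positive real roots. Necessity requires a continuity argument that tracks the motion of the four roots of $h$ as $(\alpha,\gamma)$ sweeps through the $\beta>6$ slice, with $\eta$ locating the tangency events where a complex-conjugate pair collides on the positive real axis and the condition $\alpha+\gamma>0$ ruling out the simultaneous creation of two simple positive crossings. Organizing these ingredients into the clean disjunctive statement of the lemma is, I expect, the technical heart of the argument.
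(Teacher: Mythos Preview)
The paper does not prove this lemma at all: it is quoted as Theorem~2 of Ulrich--Watson~\cite{UW1994} and used as a black box in the proofs of Theorems~\ref{th:34} and~\ref{th:35}. There is therefore no in-paper argument to compare your proposal against.

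That said, your sketch is largely on target and in places sharper than a mere citation. The rescaling $t=(e/a)^{1/4}s$ reducing to $h(s)=s^4+\alpha s^3+\beta s^2+\gamma s+1$ is exactly the normalization Ulrich--Watson use, and your observation of the $s\mapsto 1/s$ symmetry swapping $\alpha$ and $\gamma$ explains the form of the conditions. Your two identities for part~(ii) are correct (I checked both expansions) and give clean, self-contained proofs of those sufficient conditions; this is already more than the paper offers.

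Where your outline is thin is necessity in part~(i), particularly the $\beta>6$ branch. Your description of $\eta$ as ``locating the tangency events where a complex-conjugate pair collides on the positive real axis'' is not quite right: when $\Delta\ge 0$ the quartic has either four real roots or none, so in the subcase $\Delta\ge 0,\ \eta\le 0$ one is really certifying that four real roots are all nonpositive, not tracking complex pairs. Likewise, the factorization $(s^2+p_1s+q_1)(s^2+p_2s+q_2)$ with $q_1q_2=1$ does not by itself yield the disjunctive structure of cases~(2) and~(3); you need a further argument linking the signs of $p_i,q_i$ to the explicit inequalities in $\mu$ and $\eta$. These are the steps to flesh out if you want a complete independent proof.
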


A quadratic and multivariate polynomial $F(1-t,tv,tw)$ is difined by
\begin{equation}\label{eq:27}
\begin{aligned}F(1-t,tv,tw)=&A(1-t)^2+2(bw+cv)t(1-t)\\ &
 \ +(Bv^2+2awv+Cw^2)t^2,\ t,v\in[0,1],\end{aligned}\end{equation}
where $w=1-v.$ Chang-Sederberg \cite{CS1994} provided the following famous conclusion. Also see Nadler \cite{N1992}.
\begin{lemma}(\cite[Theorem 1]{CS1994}) \label{le:25} Let a quadratic form $F(1-t,tv,tw)$ be defined by \eqref{eq:27}.
Then $F(1-t,tv,tw)\geq0$ $(>0)$ for all $t\in[0,1]$ and all $v\in[0,1]$ and $w=1-v$ if and only if the inequalities
\begin{equation}\label{eq:28}
\begin{aligned}
&A\geq0 (>0),\ B\geq0 (>0), \ C\geq0 (>0),\\
&a+\sqrt{BC}\geq0 (>0),\ b+\sqrt{AC}\geq0 (>0),\ c+\sqrt{AB}\geq0 (>0),\\
&\sqrt{ABC}+a\sqrt{A}+b\sqrt{B}+c\sqrt{C}\\&\ \ \ \ +\sqrt{2(a+\sqrt{BC})(b+\sqrt{AC})(c+\sqrt{AB})}\geq0 (>0).
\end{aligned}
\end{equation}
hold simultaneously.
\end{lemma}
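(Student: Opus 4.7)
The plan is to recognize $F(1-t, tv, tw)$ as a homogeneous quadratic form in barycentric coordinates on the standard $2$-simplex, and thereby reduce the lemma to the $3\times 3$ copositivity criterion of Hadeler and Chang-Sederberg that was already recalled in Section 1.

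First I would introduce new variables $y_1 = 1-t$, $y_2 = tv$, $y_3 = t(1-v)$. Then $y_i \geq 0$ and $y_1+y_2+y_3 = 1$, and conversely every point of the simplex $\{y \in \mathbb{R}^3_+ : y_1+y_2+y_3 = 1\}$ is attained: take $t = 1-y_1 \in [0,1]$ and, when $t>0$, $v = y_2/t \in [0,1]$, while the vertex $(1,0,0)$ corresponds to $t=0$ for any choice of $v$. Expanding \eqref{eq:27} in the $y_i$ yields
\begin{equation*}
F(1-t, tv, tw) = A y_1^2 + B y_2^2 + C y_3^2 + 2c\, y_1 y_2 + 2b\, y_1 y_3 + 2a\, y_2 y_3 = y^\top M y,
\end{equation*}
where $M$ is the symmetric matrix with diagonal $(A,B,C)$ and off-diagonal entries $M_{12}=c$, $M_{13}=b$, $M_{23}=a$. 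Hence $F(1-t, tv, tw) \geq 0$ (resp.\ $>0$) for all $t, v \in [0,1]$ is equivalent to $y^\top M y \geq 0$ (resp.\ $>0$) on the intersection of $\mathbb{R}^3_+$ with the hyperplane $y_1+y_2+y_3 = 1$, and by the homogeneity of $y^\top M y$ this is in turn equivalent to the (strict) copositivity of $M$ on the whole cone $\mathbb{R}^3_+$.

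At this point I would simply plug the entries of $M$ into the $3\times 3$ Hadeler / Chang-Sederberg criterion recalled in Section 1. With $a_{11}=A$, $a_{22}=B$, $a_{33}=C$, $a_{12}=c$, $a_{13}=b$, $a_{23}=a$, the three diagonal positivity conditions and the three pairwise conditions $\alpha = c + \sqrt{AB}$, $\beta = b + \sqrt{AC}$, $\gamma = a + \sqrt{BC}$ from that criterion are precisely the first two lines of \eqref{eq:28}, and the final cubic-root inequality becomes the last line of \eqref{eq:28}; the strict case follows from the strict version of the same criterion. The only real obstacle here is bookkeeping: one must check carefully that the map $(t,v) \mapsto (1-t, tv, t(1-v))$ really sweeps out the whole simplex (the edge $t = 0$ is degenerate in $v$) and keep the sign and index conventions aligned so that the known $3\times 3$ criterion lands exactly on \eqref{eq:28}. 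No new analytic inequality needs to be proved.
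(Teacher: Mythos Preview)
The paper does not prove Lemma~\ref{le:25} at all; it simply quotes it from Chang--Sederberg \cite{CS1994}, so there is no ``paper's own proof'' to compare against. Your reduction is correct: the change of variables $y_1=1-t$, $y_2=tv$, $y_3=t(1-v)$ parametrises exactly the standard $2$-simplex, the form $F$ becomes $y^\top M y$ with the matrix you wrote, and by homogeneity the (strict) nonnegativity on the simplex is equivalent to (strict) copositivity of $M$, whence the $3\times3$ criterion recalled in the Introduction yields precisely \eqref{eq:28}. The only minor caveat is that the Introduction states the $3\times3$ criterion only in the non-strict form, so for the strict version you are implicitly relying on the corresponding strict result (also due to Hadeler/Chang--Sederberg); this is standard but worth flagging.
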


\section{\bf Copositivity of 4th order symmetric tensors}

Let $\mathcal{A}$ be an 4th order $2$ dimensional symmetric  tensor. Then for a vector $x=(x_1,x_2)^\top$,
\begin{equation}\label{eq:31}\begin{aligned}\mathcal{A}x^4&=\sum_{i,j,k,l=1}^2a_{ijkl}x_ix_jx_kx_l\\
&=a_{1111}x_1^4+4a_{1211}x_1^3x_2+6a_{1221}x_1^2x_2^2+4a_{1222}x_1x_2^3+a_{2222}x_2^4.\end{aligned}\end{equation}
Take $y=(1,0)^\top$ and $z=(0,1)^\top$. Then  $\mathcal{A}y^4=a_{1111}$ and $\mathcal{A}z^4=a_{2222}$. So, it is obvious that $a_{1111}>0$ and $a_{2222}>0$ are necessary condition of positive definiteness of $\mathcal{A}$.

\begin{theorem} \label{th:31} Let $\mathcal{A}$ be a symmetric tensor of order $4$ and dimension $2$ with $a_{1111}>0$ and $a_{2222}>0$.
	Then  $\mathcal{A}$ is positive definite if and only if
	\begin{itemize}
\item[(1)]\ $ a_{1111}a_{1221}\geq a_{1211}^2,\\ (a_{1111}a_{2222}-4a_{1211}a_{1222}+3a_{1221}^2)^3>27(a_{1111}a_{1221}a_{2222}+2a_{1211}a_{1221}a_{1222}-a_{1221}^3-a_{1111}a_{1222}^2-a_{1211}^2a_{2222})^2;$
\item[(2)]\ $ a_{1111}a_{1221}< a_{1211}^2,\\ (a_{1111}a_{2222}-4a_{1211}a_{1222}+3a_{1221}^2)^3>27(a_{1111}a_{1221}a_{2222}+2a_{1211}a_{1221}a_{1222}-a_{1221}^3-a_{1111}a_{1222}^2-a_{1211}^2a_{2222})^2,\\ 9a_{1111}^2a_{1221}^2+12a_{1211}^4+4a_{1111}^2a_{1211}a_{1222}<a_{1111}^3a_{2222}+24a_{1111}a_{1211}^2a_{1221};$
\item[(3)]\ $ a_{1111}a_{1221}> a_{1211}^2,\\ (a_{1111}a_{2222}-4a_{1211}a_{1222}+3a_{1221}^2)^3=27(a_{1111}a_{1221}a_{2222}+2a_{1211}a_{1221}a_{1222}-a_{1221}^3-a_{1111}a_{1222}^2-a_{1211}^2a_{2222})^2,\\ 9a_{1111}^2a_{1221}^2+12a_{1211}^4 +4a_{1111}^2a_{1211}a_{1222}=a_{1111}^3a_{2222}+24a_{1111}a_{1211}^2a_{1221},\\ a_{1111}^2a_{1222}+2a_{1211}^3=3a_{1111}a_{1211}a_{1221}.$
	\end{itemize}
\end{theorem}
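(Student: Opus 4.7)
The plan is to convert positive definiteness of the bivariate quartic form $\mathcal{A}x^4$ into strict positivity of a single univariate quartic, after which Lemma~\ref{le:23} applies almost verbatim. By Lemma~\ref{le:21}(iv), $\mathcal{A}$ is positive definite if and only if $\mathcal{A}x^4>0$ for every unit vector $x\in\mathbb{R}^2$. The case $x_2=0$ gives $\mathcal{A}x^4=a_{1111}x_1^4>0$ by hypothesis, so it suffices to handle $x_2\neq 0$. There I would factor $x_2^4$ out of \eqref{eq:31} and set $t=x_1/x_2$ to obtain
\begin{equation*}
\mathcal{A}x^4 = x_2^4\,f(t),\qquad f(t) = a_{1111}t^4 + 4a_{1211}t^3 + 6a_{1221}t^2 + 4a_{1222}t + a_{2222}.
\end{equation*}
As $x$ ranges over vectors with $x_2\neq 0$, the parameter $t$ sweeps out all of $\mathbb{R}$, so positive definiteness of $\mathcal{A}$ is equivalent to $f(t)>0$ for every real $t$. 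Since $f(0)=a_{2222}>0$ by hypothesis, the remaining requirement is $f(t)>0$ for all $0<|t|<\infty$, which is precisely the condition addressed by Lemma~\ref{le:23}.

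Next I would apply Lemma~\ref{le:23} to $f$ under the identification $a_0=a_{1111},\ a_1=a_{1211},\ a_2=a_{1221},\ a_3=a_{1222},\ a_4=a_{2222}$, whose positivity hypotheses $a_0>0,\ a_4>0$ match the standing assumptions on $\mathcal{A}$. Direct substitution yields
\begin{align*}
H &= a_{1111}a_{1221}-a_{1211}^2,\\
F &= 9a_{1111}^2a_{1221}^2-24a_{1111}a_{1211}^2a_{1221}+12a_{1211}^4-a_{1111}^3a_{2222}+4a_{1111}^2a_{1211}a_{1222},\\
G &= a_{1111}^2a_{1222}-3a_{1111}a_{1211}a_{1221}+2a_{1211}^3,\\
I &= a_{1111}a_{2222}-4a_{1211}a_{1222}+3a_{1221}^2,\\
J &= a_{1111}a_{1221}a_{2222}+2a_{1211}a_{1221}a_{1222}-a_{1221}^3-a_{1111}a_{1222}^2-a_{1211}^2a_{2222}.
\end{align*}
Feeding these expressions into the three alternative conditions of Lemma~\ref{le:23} gives the three cases of the theorem: $H\geq0$ paired with $\Delta=I^3-27J^2>0$ becomes case (1); $H<0$, $\Delta>0$, $F<0$ becomes case (2); and $H>0$, $\Delta=0$, $F=0$, $G=0$ becomes case (3). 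The inequalities $\Delta>0$ and $\Delta=0$ expand exactly into the cubic-versus-square (in)equalities in $I$ and $J$ that appear in the statement.

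The entire conceptual content lies in the dehomogenization; after that the proof reduces to a mechanical translation of Lemma~\ref{le:23} through a fixed renaming of variables. I therefore do not expect a genuine obstacle, only the bookkeeping of confirming the displayed formulas for $H,F,G,I,J$ and checking that no information is lost when passing between $\{x\in\mathbb{R}^2:x_2\neq0\}$ and $t\in\mathbb{R}\setminus\{0\}$, which is handled by evaluating $f(0)=a_{2222}$ and $\mathcal{A}(x_1,0)^4=a_{1111}x_1^4$ separately.
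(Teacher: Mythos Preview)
Your proposal is correct and follows essentially the same approach as the paper: dehomogenize the quartic form by setting $t=x_1/x_2$ (after disposing of the trivial coordinate-axis cases via $a_{1111}>0$ and $a_{2222}>0$), then apply Lemma~\ref{le:23} with the identification $a_0=a_{1111},\,a_1=a_{1211},\,a_2=a_{1221},\,a_3=a_{1222},\,a_4=a_{2222}$. The paper organizes the boundary cases slightly differently (listing $x_1=0$, $x_2=0$, and both nonzero as three separate cases), but the content is identical.
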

	
	\begin{proof}  It follows from Lemma \ref{le:21} that we can restrict $x$ to $$\|x\|=|x_1|+|x_2|=1.$$
	Consider the homogeneous polynomial $\mathcal{A}x^4$ with $a_{1111}>0$ and $a_{2222}>0$	in three cases.
	
	Case 1. $x_1=0$ and $x_2\ne0$. Then $|x_2|=1$, and hence, $\mathcal{A}x^4=a_{2222}>0.$
	
	Case 2. $x_1\ne0$ and $x_2=0$. Then $|x_1|=1$, and hence, $\mathcal{A}x^4=a_{1111}>0.$
	
	Case 3. $x_1\ne0$ and $x_2\ne0$. Then the homogeneous polynomial $\mathcal{A}x^4$ can be divided by $x_2^4$ to  yield $$\frac{\mathcal{A}x^4}{x_2^4}=a_{1111}\left(\frac{x_1}{x_2}\right)^4+4a_{1211}\left(\frac{x_1}{x_2}\right)^3+6a_{1221}\left(\frac{x_1}{x_2}\right)^2+4a_{1222}\left(\frac{x_1}{x_2}\right)+a_{2222}.$$
Let $t=\frac{x_1}{x_2}$ and $f(t)=\frac{\mathcal{A}x^4}{x_2^4}$, i.e., \begin{equation}\label{eq:32}f(t)=a_{1111}t^4+4a_{1211}t^3+6a_{1221}t^2+4a_{1222}t+a_{2222}.\end{equation}
	Clearly, $f(t)>0$ if and only if $\mathcal{A}x^4>0$. Assume that
	$$\begin{aligned}
	F&=9a_{1111}^2a_{1221}^2-24a_{1111}a_{1211}^2a_{1221}+12a_{1211}^4-a_{1111}^3a_{2222} +4a_{1111}^2a_{1211}a_{1222},\\
	G&=a_{1111}^2a_{1222}-3a_{1111}a_{1211}a_{1221}+2a_{1211}^3,\\
	H&=a_{1111}a_{1221}-a_{1211}^2,\\
	I&=a_{1111}a_{2222}-4a_{1211}a_{1222}+3a_{1221}^2,\\
	J&=a_{1111}a_{1221}a_{2222}+2a_{1211}a_{1221}a_{1222}-a_{1221}^3-a_{1111}a_{1222}^2-a_{1211}^2a_{2222},\\
	\Delta&=I^3-27J^2.
	\end{aligned}$$ Therefore, the conclusions directly follow from Lemma \ref{le:23} with $a_0=a_{1111}$, $a_1=a_{1211}$, $a_2=a_{1221}$, $a_3=a_{1222}$ and $a_4=a_{2222}$, as required.
		\end{proof}

\begin{remark}	
It follows from the proof of Theorem \ref{th:31} that $\mathcal{A}x^4$ may be divided by $x_1^4$, then $$f(t)=\frac{\mathcal{A}x^4}{x_1^4}=a_{1111}+4a_{1211}t+6a_{1221}t^2+4a_{1222}t^3+a_{2222}t^4,$$ where $t=\frac{x_2}{x_1}$. So the conclusions still hold if $a_{2222}$ and $a_{1111}$ are replaced each other on the assumptions of Theorem \ref{th:31}.	
\end{remark}

	\begin{theorem} \label{th:32} Let $\mathcal{A}$ be a symmetric tensor of order $4$ and dimension $3$. Assume that
		$$\begin{aligned}
		&a_{1111}\geq0, a_{2222}\geq0, a_{3333}\geq0, a_{1122}\geq0, a_{1133}\geq0, a_{2233}\geq0,\\
		&\alpha_1=6a_{1231}+3\sqrt{a_{1122}a_{1133}}\geq0, \beta_1= 2a_{1113}+\sqrt{3a_{1111}a_{1133}}\geq0,\\
		& \gamma_1=2a_{2111}+\sqrt{3a_{1111}a_{1122}}\geq0,\alpha_2=2a_{3222}+\sqrt{3a_{2222}a_{2233}}\geq0,\\
		& \beta_2= 6a_{1223}+3\sqrt{a_{1122}a_{2233}}\geq0, \gamma_2=2a_{1222}+\sqrt{3a_{1122}a_{2222}}\geq0,\\
		&\alpha_3=2a_{2333}+\sqrt{3a_{3333}a_{2233}}\geq0, \beta_3= 2a_{1333}+\sqrt{3a_{1133}a_{3333}}\geq0,\\
			& \gamma_3=6a_{1233}+3\sqrt{2a_{1133}a_{2233}}\geq0,
\end{aligned}$$
$$\begin{aligned}
	\tau_1=&3\sqrt{a_{1111}a_{1122}a_{1133}}+6a_{1231}\sqrt{a_{1111}}+2a_{1113}\sqrt{3a_{1122}}+2a_{2111}\sqrt{3a_{1133}}\\
		&\ \ \ \ +\sqrt{2\alpha_1\beta_1\gamma_1}\geq0,\\
	\tau_2=&3\sqrt{a_{1122}a_{2222}a_{2233}}+2a_{3222}\sqrt{3a_{1122}}+6a_{1223}\sqrt{a_{2222}}+2a_{1222}\sqrt{3a_{2233}}\\
		&\ \ \ \ +\sqrt{2\alpha_2\beta_2\gamma_2}\geq0,\\
	\tau_3=&3\sqrt{a_{1133}a_{3333}a_{2233}}+2a_{2333}\sqrt{3a_{1133}}+2a_{1333}\sqrt{3a_{2233}}+6a_{1233}\sqrt{a_{3333}}\\
		&\ \ \ \ +\sqrt{2\alpha_3\beta_3\gamma_3}\geq0.
\end{aligned}$$
		Then  $\mathcal{A}$ is copositive.
	\end{theorem}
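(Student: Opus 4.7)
The plan is to prove copositivity by decomposing the quartic form $\mathcal{A}x^4$ into three pieces, each of the form $x_i^2 \cdot Q_i(x)$ where $Q_i$ is a quadratic form, and then to recognize the hypotheses of the theorem as exactly the Hadeler/Chang--Sederberg copositivity conditions (Lemma \ref{le:25}) for the matrices representing $Q_1,Q_2,Q_3$. This is a clean application of the ``reducing orders'' strategy advertised in the introduction: a 4th-order copositivity test is reduced to three 2nd-order (matrix) copositivity tests.

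\textbf{Step 1 (decomposition).} Expand $\mathcal{A}x^4$ as the standard homogeneous polynomial in $x_1,x_2,x_3$ and seek a representation
\[
\mathcal{A}x^4 \;=\; x_1^2\,Q_1(x) + x_2^2\,Q_2(x) + x_3^2\,Q_3(x).
\]
Matching coefficients monomial by monomial shows that the pure power $x_i^4$, the ``near-diagonal'' monomials $x_i^3 x_j$, and the triply-indexed term $x_i^2 x_j x_k$ can each appear in only one of the three summands, while each $x_i^2 x_j^2$-coefficient $6 a_{iijj}$ splits naturally as $3a_{iijj}+3a_{iijj}$ between the $i$-th and $j$-th summands. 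This produces, for example,
\[
Q_1(x) = a_{1111}x_1^2 + 3a_{1122}x_2^2 + 3a_{1133}x_3^2 + 4a_{1112}x_1x_2 + 4a_{1113}x_1x_3 + 12a_{1123}x_2x_3,
\]
together with the obvious analogues $Q_2,Q_3$ obtained by cyclically permuting the distinguished index.

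\textbf{Step 2 (matrix identification).} Each $Q_i$ is the quadratic form $M_i x^2$ of a symmetric $3\times 3$ matrix $M_i$. For $i=1$ this is
\[
M_1=\begin{pmatrix} a_{1111} & 2a_{1112} & 2a_{1113}\\ 2a_{1112} & 3a_{1122} & 6a_{1123}\\ 2a_{1113} & 6a_{1123} & 3a_{1133}\end{pmatrix},
\]
and analogously for $M_2$, $M_3$. Applying Lemma \ref{le:25} to $M_1$ with the identification $A=a_{1111}$, $B=3a_{1122}$, $C=3a_{1133}$, $c=2a_{1112}$, $b=2a_{1113}$, $a=6a_{1123}$, the three quantities $a+\sqrt{BC}$, $b+\sqrt{AC}$, $c+\sqrt{AB}$ are exactly $\alpha_1,\beta_1,\gamma_1$, and the final ``big'' inequality in Lemma \ref{le:25} reduces verbatim to $\tau_1\ge 0$. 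The symmetric identifications handle $M_2$ with $(\alpha_2,\beta_2,\gamma_2,\tau_2)$ and $M_3$ with $(\alpha_3,\beta_3,\gamma_3,\tau_3)$.

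\textbf{Step 3 (conclusion).} Under the hypotheses of the theorem, Lemma \ref{le:25} yields that each $M_i$ is copositive, hence $Q_i(x)\ge 0$ whenever $x\ge 0$. Since also $x_i^2\ge 0$, adding the three identities gives $\mathcal{A}x^4\ge 0$ for every $x\in\mathbb{R}^3_+$, which is copositivity of $\mathcal{A}$.

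\textbf{Main obstacle.} The mathematical content is short; the real pitfall is clerical. One has to keep straight (a) the factor-of-two doubling of off-diagonal entries coming from the symmetric-matrix convention $M y^2=\sum_{j,k}M_{jk}y_jy_k$, (b) the factor of $6$ on the mixed term $a_{iijk}$ coming from the multinomial coefficient $12$ of $x_i^2x_jx_k$, and (c) the correct choice of which off-diagonal of $M_i$ is identified with $a,b,c$ in Lemma \ref{le:25} so that the resulting $\alpha_i,\beta_i,\gamma_i,\tau_i$ match the formulas in the statement exactly. Once this bookkeeping is settled, the proof is a direct invocation of Lemma \ref{le:25}.
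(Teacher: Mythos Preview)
Your proposal is correct and is essentially the same argument as the paper's: the paper parametrizes the simplex by $x_1=1-t$, $x_2=tv$, $x_3=tw$ and writes exactly your decomposition $\mathcal{A}x^4 = x_1^2 F_1 + x_2^2 F_2 + x_3^2 F_3$ in those coordinates, then applies Lemma~\ref{le:25} to each $F_i$ with the same identifications you make. Your presentation is arguably cleaner, since it recognizes directly that Lemma~\ref{le:25} is the $3\times 3$ matrix copositivity criterion and skips the simplex-parametrization bookkeeping, but the mathematical content is identical.
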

	
	\begin{proof}  It follows from Lemma \ref{le:21} that we can restrict $x$ to
		$$\|x\|=x_1+x_2+x_3=1\mbox{ and }x_i\geq0\mbox{ for }i=1,2,3.$$
	Without loss of generality, let $x_1=1-t$ and  $x_2=tv$ and  $x_3=tw$ for $t,v\in[0,1]$ and $w=1-v$.  Clearly, we have
		\begin{equation}\label{eq:32}\begin{aligned}\mathcal{A}x^4=&\sum_{i,j,k,l=1}^3a_{ijkl}x_ix_jx_kx_l\\
		=&a_{1111}(1-t)^4+a_{2222}(tv)^4+a_{3333}(tw)^4\\
		&+4a_{1222}(1-t)(tv)^3 +4a_{1333}(1-t)(tw)^3 +4a_{2111}(1-t)^3(tv)\\&+4a_{2333}(tv)(tw)^3+4a_{3111}(1-t)^3(tw)
		+4a_{3222}(tv)^3(tw)\\
		&+6a_{1122}(1-t)^2(tv)^2+6a_{1133}(1-t)^2(tw)^2+6a_{2233}(tv)^2(tw)^2\\
		&+12a_{1231}(1-t)^2(tv)(tw)+12a_{1232}(1-t)(tv)^2(tw)\\
		&+12a_{1233}(1-t)(tv)(tw)^2.
		\end{aligned}\end{equation}
Then through simple calculation to yield
	$$\begin{aligned}\mathcal{A}x^4=&[a_{1111}(1-t)^2+2(2a_{3111}w+2a_{2111}v)t(1-t)\\
	&+(3a_{1122}v^2+12a_{1231}vw+3a_{1133}w^2)t^2](1-t)^2\\
	&+[3a_{1122}(1-t)^2+2(6a_{1232}w+2a_{1222}v)t(1-t)\\
	&+(a_{2222}v^2+4a_{3222}vw+3a_{2233}w^2)t^2](tv)^2\\
	&+[3a_{1133}(1-t)^2+2(2a_{1333}w+6a_{1233}v)t(1-t)\\
	&+(3a_{2233}v^2+4a_{2333}vw+a_{3333}w^2)t^2](tw)^2.
	\end{aligned}$$	
Let $$\begin{aligned}
F_1(1-t,tv,tw)=&a_{1111}(1-t)^2+2(2a_{3111}w+2a_{2111}v)t(1-t)\\
&+(3a_{1122}v^2+12a_{1231}vw+3a_{1133}w^2)t^2; \\
F_2(1-t,tv,tw)=&3a_{1122}(1-t)^2+2(6a_{1232}w+2a_{1222}v)t(1-t)\\
&+(a_{2222}v^2+4a_{3222}vw+3a_{2233}w^2)t^2;\\
F_3(1-t,tv,tw)=&3a_{1133}(1-t)^2+2(2a_{1333}w+6a_{1233}v)t(1-t)\\
&+(3a_{2233}v^2+4a_{2333}vw+a_{3333}w^2)t^2.
\end{aligned}$$

For the function $F_1(1-t,tv,tw)$ with the assumptions that $$a_{1111}\geq0,a_{1122}\geq0,a_{1133}\geq0, \alpha_1\geq0,\beta_1\geq0,\gamma_1\geq0,\tau_1\geq0,$$
it follows from Lemma \ref{le:25} that $F_1(1-t,tv,tw)\geq0$  for all $t,v\in[0,1]$ and $w=1-v$. Similarly, we also have $F_2(1-t,tv,tw)\geq0$ and $F_3(1-t,tv,tw)\geq0$  for all $t,v\in[0,1]$ and $w=1-v$. So,
$$\mathcal{A}x^4=F_1(1-t,tv,tw)(1-t)^2+F_2(1-t,tv,tw)(tv)^2+F_3(1-t,tv,tw)(tw)^2\geq0.$$
  Therefore, $\mathcal{A}x^4\geq0$ for all $x\geq0$ and $\|x\|=1$. Namely, the tensor $\mathcal{A}$ is copositive, as required.
	\end{proof}

Obviously, if ``$\geq$" is replaced by ``$>$" in all conditions of Theorem \ref{th:32}, then the strict copositivity of   $\mathcal{A}$ can be showed easily.
	
	\begin{theorem} \label{th:33} Let $\mathcal{A}$ be a symmetric tensor of order $4$ and dimension $3$. Assume that
		$$\begin{aligned}
		&a_{1111}>0, a_{2222}>0, a_{3333}>0, a_{1122}>0, a_{1133}>0, a_{2233}>0,\\
		&\alpha_1=6a_{1231}+3\sqrt{a_{1122}a_{1133}}>0, \beta_1= 2a_{1113}+\sqrt{3a_{1111}a_{1133}}>0,\\
		& \gamma_1=2a_{2111}+\sqrt{3a_{1111}a_{1122}}>0,\alpha_2=2a_{3222}+\sqrt{3a_{2222}a_{2233}}>0,\\
		& \beta_2= 6a_{1223}+3\sqrt{a_{1122}a_{2233}}>0, \gamma_2=2a_{1222}+\sqrt{3a_{1122}a_{2222}}>0,\\
		&\alpha_3=2a_{2333}+\sqrt{3a_{3333}a_{2233}}>0, \beta_3= 2a_{1333}+\sqrt{3a_{1133}a_{3333}}>0,\\
		& \gamma_3=6a_{1233}+3\sqrt{2a_{1133}a_{2233}}>0,
		\end{aligned}$$
		$$\begin{aligned}
		\tau_1=&3\sqrt{a_{1111}a_{1122}a_{1133}}+6a_{1231}\sqrt{a_{1111}}+2a_{1113}\sqrt{3a_{1122}}+2a_{2111}\sqrt{3a_{1133}}\\
		&\ \ \ \ +\sqrt{2\alpha_1\beta_1\gamma_1}>0,\\
		\tau_2=&3\sqrt{a_{1122}a_{2222}a_{2233}}+2a_{3222}\sqrt{3a_{1122}}+6a_{1223}\sqrt{a_{2222}}+2a_{1222}\sqrt{3a_{2233}}\\
		&\ \ \ \ +\sqrt{2\alpha_2\beta_2\gamma_2}>0,\\
		\tau_3=&3\sqrt{a_{1133}a_{3333}a_{2233}}+2a_{2333}\sqrt{3a_{1133}}+2a_{1333}\sqrt{3a_{2233}}+6a_{1233}\sqrt{a_{3333}}\\
		&\ \ \ \ +\sqrt{2\alpha_3\beta_3\gamma_3}>0.
		\end{aligned}$$
		Then  $\mathcal{A}$ is strictly copositive.
		\end{theorem}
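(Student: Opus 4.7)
The plan is to mimic the proof of Theorem \ref{th:32} verbatim and then refine only the final step, where ``$\geq 0$'' needs to be sharpened to ``$>0$''. By Lemma \ref{le:21}(ii), it suffices to show $\mathcal{A}x^4 > 0$ for every $x \in \mathbb{R}^3_+$ with $\|x\|_1 = x_1+x_2+x_3 = 1$. I would use the same parametrization $x_1 = 1-t$, $x_2 = tv$, $x_3 = tw$ with $t,v \in [0,1]$ and $w = 1-v$, which surjects onto the standard simplex.

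Next, I would carry out the identical algebraic rearrangement used in Theorem \ref{th:32} to obtain the decomposition
\begin{equation*}
\mathcal{A}x^4 = F_1(1-t,tv,tw)(1-t)^2 + F_2(1-t,tv,tw)(tv)^2 + F_3(1-t,tv,tw)(tw)^2,
\end{equation*}
where $F_1, F_2, F_3$ are exactly the quadratic-in-$(1-t,tv,tw)$ forms defined there. Since the strict versions of the coefficient inequalities in the hypothesis are precisely the strict version of the Chang--Sederberg conditions \eqref{eq:28} applied to each $F_i$ (with $A = a_{1111}$, $B = 3a_{1122}v^2 + 12 a_{1231}vw + 3a_{1133}w^2$ evaluated appropriately, etc., but more cleanly, $A,B,C$ read off from the three coefficients of each $F_i$), Lemma \ref{le:25} delivers $F_i(1-t,tv,tw) > 0$ for all $t, v \in [0,1]$, $w = 1-v$, and all $i \in \{1,2,3\}$.

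The genuinely new ingredient, and the only place one has to be careful, is ruling out the case where all three weights $(1-t)^2$, $(tv)^2$, $(tw)^2$ vanish simultaneously, since strict positivity of the $F_i$'s alone is not enough. But this is automatic: if $(1-t)^2 = (tv)^2 = (tw)^2 = 0$, then $x_1 = x_2 = x_3 = 0$, contradicting $\|x\|_1 = 1$. Hence at least one of the three nonnegative weights is strictly positive, and combined with the strict positivity of the corresponding $F_i$, we conclude $\mathcal{A}x^4 > 0$.

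The main obstacle is therefore essentially bookkeeping rather than mathematical: one must verify that when I apply Lemma \ref{le:25} to each $F_i$ with its actual coefficients, the seven strict inequalities that arise match exactly the three triples $(\alpha_i,\beta_i,\gamma_i,\tau_i)$ together with the diagonal positivity conditions listed in the statement, including the $\sqrt{3}$ and $\sqrt{2}$ factors produced by the coefficients $3a_{1122},3a_{1133},\ldots$ and the cross term $12a_{1231}$ in $F_1$ (and analogously for $F_2, F_3$). Once those identifications are checked, the rest is immediate.
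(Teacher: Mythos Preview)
Your proposal is correct and follows exactly the route the paper takes: the paper states just before Theorem~\ref{th:33} that the result follows from the proof of Theorem~\ref{th:32} by replacing ``$\geq$'' with ``$>$'' throughout, and your write-up makes this precise. In fact you are slightly more careful than the paper, since you spell out the only nontrivial point---that on the simplex at least one of the weights $(1-t)^2,(tv)^2,(tw)^2$ is strictly positive---which the paper leaves implicit.
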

	
\begin{remark}
 From the proof of Theorems \ref{th:32} and \ref{th:33}, it is easily seen to prove the desired results by reducing orders of tensor. That is, an 4th order 3 dimensional tensor is decomposed  three 2nd order 3 dimensional tensors, and then, analysing the copositivity of these 2nd order tensors to obtain the desired sufficient conditions. This may be a very important way to studying higher tensors in future.
\end{remark}

	\begin{theorem} \label{th:34} Let $\mathcal{A}$ be a symmetric tensor of order $4$ and dimension $2$ with $a_{1111}>0$ and $a_{2222}>0$. Assume that
		$$\begin{aligned}
		(1)\	a_{1221}\leq \sqrt{a_{1111}a_{2222}},\ \ &4a_{1211}\sqrt[4]{a_{2222}}+ \sqrt[4]{a_{1111}}(3a_{1221}+\sqrt{a_{1111}a_{2222}})>0 \mbox{ and }\\& 4a_{1222}\sqrt[4]{a_{1111}}+\sqrt[4]{a_{2222}}(3a_{1221}+\sqrt{a_{1111}a_{2222}})>0; \\
		(2)\	a_{1221}>\sqrt{a_{1111}a_{2222}},\ \  &2a_{1211}+\sqrt{6a_{1221}a_{1111}-2a_{1111}\sqrt{a_{1111}a_{2222}}}>0 \ \mbox{ and }\\& 2a_{1222}+\sqrt{6a_{1221}a_{2222}-2a_{2222}\sqrt{a_{1111}a_{2222}}}>0. \\
		\end{aligned}$$
		Then $\mathcal{A}$ is copositive.
	\end{theorem}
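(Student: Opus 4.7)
The plan is to reproduce the same dehomogenization trick used in Theorem \ref{th:31}, but instead of invoking Lemma \ref{le:23} (necessary and sufficient conditions for positivity on the full real line), I will invoke part (ii) of Lemma \ref{le:24}, which provides only sufficient conditions but applies to $t>0$, matching the copositive (non-negative orthant) setting. By Lemma \ref{le:21}, I may restrict to $x_1+x_2=1$ with $x_1,x_2\ge0$. The two boundary points $(1,0)$ and $(0,1)$ give $\mathcal{A}x^4=a_{1111}>0$ and $\mathcal{A}x^4=a_{2222}>0$, respectively. For the interior case $x_1,x_2>0$, divide $\mathcal{A}x^4$ by $x_2^4$ and set $t=x_1/x_2>0$ to obtain
\begin{equation*}
g(t)=\frac{\mathcal{A}x^4}{x_2^4}=a_{1111}t^4+4a_{1211}t^3+6a_{1221}t^2+4a_{1222}t+a_{2222},
\end{equation*}
which is exactly the quartic \eqref{eq:26} with $a=a_{1111}$, $b=4a_{1211}$, $c=6a_{1221}$, $d=4a_{1222}$, $e=a_{2222}$, and both $a>0$ and $e>0$.

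With this identification the scalars appearing in Lemma \ref{le:24} become
\begin{equation*}
\alpha=\frac{4a_{1211}}{a_{1111}^{3/4}a_{2222}^{1/4}},\quad \beta=\frac{6a_{1221}}{\sqrt{a_{1111}a_{2222}}},\quad \gamma=\frac{4a_{1222}}{a_{1111}^{1/4}a_{2222}^{3/4}}.
\end{equation*}
The dichotomy in the statement corresponds exactly to the break point $\beta=6$ in Lemma \ref{le:24}(ii): the hypothesis $a_{1221}\le\sqrt{a_{1111}a_{2222}}$ is equivalent to $\beta\le6$, while $a_{1221}>\sqrt{a_{1111}a_{2222}}$ is equivalent to $\beta>6$. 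I will then apply clause (ii)(1) of Lemma \ref{le:24} in the first case and clause (ii)(2) in the second case to conclude $g(t)\ge0$ for all $t>0$, which gives $\mathcal{A}x^4\ge0$ for all $x\ge0$ with $x_2>0$; by symmetry (or by the boundary analysis above) this yields copositivity.

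The only real work is checking that the clean analytic inequalities in the theorem statement are exactly the rewritings of the Ulrich--Watson inequalities after clearing denominators. In case (1), $\alpha>-(\beta+2)/2$, after multiplying by $\tfrac12 a_{1111}^{3/4}a_{2222}^{1/2}$ and using $a_{1111}^{3/4}a_{2222}^{1/2}=\sqrt[4]{a_{1111}}\,\sqrt{a_{1111}a_{2222}}$, becomes
\begin{equation*}
4a_{1211}\sqrt[4]{a_{2222}}+\sqrt[4]{a_{1111}}\bigl(3a_{1221}+\sqrt{a_{1111}a_{2222}}\bigr)>0,
\end{equation*}
and $\gamma>-(\beta+2)/2$ becomes the symmetric inequality with $a_{1111}$ and $a_{2222}$ swapped and $a_{1211}$ replaced by $a_{1222}$. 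In case (2), $\alpha>-2\sqrt{\beta-2}$ rearranges, after multiplying by $a_{1111}^{3/4}a_{2222}^{1/4}$ and taking the factor inside the radical, to $2a_{1211}+\sqrt{6a_{1111}a_{1221}-2a_{1111}\sqrt{a_{1111}a_{2222}}}>0$, and similarly for $\gamma>-2\sqrt{\beta-2}$.

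The main obstacle is purely bookkeeping in this last step: one must verify that the scaling factors $a_{1111}^{p}a_{2222}^{q}$ needed to clear the quarter-power denominators reassemble into the exact symmetric expressions printed in the statement, and also that $\beta-2>0$ (hence $\sqrt{\beta-2}$ is real) in case (2) so that the square root $\sqrt{6a_{1111}a_{1221}-2a_{1111}\sqrt{a_{1111}a_{2222}}}$ is likewise real --- this follows automatically from $a_{1221}>\sqrt{a_{1111}a_{2222}}$. No substantive difficulty arises beyond this algebraic unpacking, so the theorem reduces to a direct application of Lemma \ref{le:24}(ii).
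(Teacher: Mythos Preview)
Your proposal is correct and follows essentially the same approach as the paper's own proof: restrict to the simplex via Lemma~\ref{le:21}, handle the two boundary points directly, dehomogenize by $t=x_1/x_2$ to obtain the quartic $g(t)$, identify $\alpha,\beta,\gamma$, and then verify that assumptions (1) and (2) translate exactly into the two clauses of Lemma~\ref{le:24}(ii). The only (harmless) slip is the stray factor $\tfrac12$ in your description of the scaling in case~(1); multiplying $\alpha+(\beta+2)/2>0$ by $a_{1111}^{3/4}a_{2222}^{1/2}$ already yields the stated inequality without an extra $\tfrac12$.
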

	\begin{proof}  It follows from Lemma \ref{le:21} that we can restrict $x=(x_1,x_2)^\top$ to $$x_1\geq0,\ x_2\geq0,\ \|x\|=x_1+x_2=1.$$
		Consider the homogeneous polynomial $\mathcal{A}x^4$ with $a_{1111}>0$ and $a_{2222}>0$	in three cases.
		
		Case 1. $x_1=0$ and $x_2\ne0$. Then $x_2=1$, and hence, $\mathcal{A}x^4=a_{2222}>0.$
		
		Case 2. $x_1\ne0$ and $x_2=0$. Then $x_1=1$, and hence, $\mathcal{A}x^4=a_{1111}>0.$
		
		Case 3. $x_1\ne0$ and $x_2\ne0$. Then the homogeneous polynomial $\mathcal{A}x^4$ can be divided by $x_2^4$ to  yield $$\frac{\mathcal{A}x^4}{x_2^4}=a_{1111}\left(\frac{x_1}{x_2}\right)^4+4a_{1211}\left(\frac{x_1}{x_2}\right)^3+6a_{1221}\left(\frac{x_1}{x_2}\right)^2+4a_{1222}\left(\frac{x_1}{x_2}\right)+a_{2222}.$$
		Let $t=\frac{x_1}{x_2}$ and $g(t)=\frac{\mathcal{A}x^4}{x_2^4}$, i.e., \begin{equation}\label{eq:32}g(t)=a_{1111}t^4+4a_{1211}t^3+6a_{1221}t^2+4a_{1222}t+a_{2222}.\end{equation}
		Clearly, $g(t)\geq0$ if and only if $\mathcal{A}x^4\geq0$. Let $$	\alpha=4a_{1211}a_{1111}^{-\frac34}a_{2222}^{-\frac14},\ \beta=6a_{1221}a_{1111}^{-\frac12}a_{2222}^{-\frac12},\ \gamma=4a_{1222}a_{1111}^{-\frac14}a_{2222}^{-\frac34}.$$
	Then for the aussumption (1), the inquality	$a_{1221}\leq \sqrt{a_{1111}a_{2222}}$ means that $$a_{1221}a_{1111}^{-\frac12}a_{2222}^{-\frac12}\leq1, \mbox{ i.e., }\beta\leq6;$$
Multiply the inquality $4a_{1211}\sqrt[4]{a_{2222}}+ \sqrt[4]{a_{1111}}(3a_{1221}+\sqrt{a_{1111}a_{2222}})>0$  by $a_{1111}^{-\frac34}a_{2222}^{-\frac12}$ to yield $$4a_{1211}a_{1111}^{-\frac34}a_{2222}^{-\frac14}+ a_{1111}^{-\frac12}a_{2222}^{-\frac12}(3a_{1221}+\sqrt{a_{1111}a_{2222}})>0. $$
Namely, $$\alpha=4a_{1211}a_{1111}^{-\frac34}a_{2222}^{-\frac14}>- (3a_{1221}a_{1111}^{-\frac12}a_{2222}^{-\frac12}+1)=-\frac{\beta+2}{2}. $$	
Similarly, multiply the inquality $4a_{1222}\sqrt[4]{a_{1111}}+ \sqrt[4]{a_{2222}}(3a_{1221}+\sqrt{a_{1111}a_{2222}})>0$ by $a_{1111}^{-\frac12}a_{2222}^{-\frac34}$ to yield $$\gamma=4a_{1222}a_{1111}^{-\frac14}a_{2222}^{-\frac34}>-(3a_{1221}a_{1111}^{-\frac12}a_{2222}^{-\frac12}+1)=-\frac{\beta+2}{2}.$$
This means that $$\alpha>-\frac{\beta+2}{2} \mbox{ and }\gamma>-\frac{\beta+2}{2} \mbox{ for }\beta\leq6.$$ Likewise, the assumptions  (2) imply that $$\alpha>-2\sqrt{\beta-2} \mbox{ and }\gamma>-2\sqrt{\beta-2} \mbox{ for }\beta>6.$$
So the conclusions directly follow from Lemma \ref{le:24} (ii), as required.
	\end{proof}	
Similarly, using Lemma \ref{le:24} (i), the following conclusion is obtained easily.

\begin{theorem} \label{th:35} Let $\mathcal{A}$ be a symmetric tensor of order $4$ and dimension $2$ with $a_{1111}>0$ and $a_{2222}>0$.
	Then $\mathcal{A}$ is copositive if and only if
	\begin{itemize}
	\item[(1)]\ $a_{1221}<-\frac{1}{3}\sqrt{a_{1111}a_{2222}},\ a_{1211}\sqrt{a_{2222}}+a_{1222}\sqrt{a_{1111}}>0,\\ (3a_{1221}^2-4a_{1211}a_{1222}+a_{1111}a_{2222})^3\leq27(a_{1111}a_{1221}a_{2222}+2a_{1211}a_{1221}a_{1222}-a_{1221}^3-a_{1111}a_{1222}^2-a_{1211}^2a_{2222})^2;$
	\item[(2)]\ $-\frac{1}{3}\sqrt{a_{1111}a_{2222}}\leq a_{1221}\leq \sqrt{a_{1111}a_{2222}}\ \mbox{ and }\\
 \begin{cases}(3a_{1221}^2-4a_{1211}a_{1222}+a_{1111}a_{2222})^3\leq 27(a_{1111}a_{1221}a_{2222}\\ +2a_{1211}a_{1221}a_{1222} -a_{1221}^3-a_{1111}a_{1222}^2-a_{1211}^2a_{2222})^2, \\ a_{1211}\sqrt{a_{2222}}+a_{1222}\sqrt{a_{1111}}>0,\\
 \ \ \ \ \mbox{\ \ or} \\
 (3a_{1221}^2-4a_{1211}a_{1222}+a_{1111}a_{2222})^3\geq 27(a_{1111}a_{1221}a_{2222}+2a_{1211}a_{1221}a_{1222}\\
 -a_{1221}^3-a_{1111}a_{1222}^2-a_{1211}^2a_{2222})^2,\\ a_{1211}^2a_{2222}-2a_{1211}a_{1222}\sqrt{a_{1111}a_{2222}}+a_{1222}^2a_{1111}\leq 4a_{1211}a_{1111}^{\frac34}a_{2222}^{\frac54}\\+6a_{1111}a_{1211}a_{2222}+4a_{1222}a_{1111}^{\frac54}a_{2222}^{\frac34}+2a_{1111}^{\frac32}a_{2222}^{\frac32}
 \end{cases}$
	\item[(3)]\ $a_{1221}>\sqrt{a_{1111}a_{2222}}\ \mbox{ and }\\
 \begin{cases} (3a_{1221}^2-4a_{1211}a_{1222}+a_{1111}a_{2222})^3\leq 27(a_{1111}a_{1221}a_{2222}\\ +2a_{1211}a_{1221}a_{1222} -a_{1221}^3-a_{1111}a_{1222}^2-a_{1211}^2a_{2222})^2, \\ a_{1211}\sqrt{a_{2222}}+a_{1222}\sqrt{a_{1111}}>0,\\
 \ \ \ \ \mbox{\ \ or} \\  a_{1211}>0\ \mbox{ and }\ a_{1222}>0\\
  \ \ \ \ \mbox{\ \ or} \\
  (3a_{1221}^2-4a_{1211}a_{1222}+a_{1111}a_{2222})^3\geq 27(a_{1111}a_{1221}a_{2222}\\+2a_{1211}a_{1221}a_{1222}
 -a_{1221}^3-a_{1111}a_{1222}^2-a_{1211}^2a_{2222})^2,\\
 (a_{1211}\sqrt{a_{2222}}-a_{1222}\sqrt{a_{1111}})^2\sqrt{6a_{1221}-2\sqrt{a_{1111}a_{2222}}}\\\leq 6a_{1221}+2\sqrt{a_{1111}a_{2222}})(a_{1211}a_{2222}\sqrt{a_{1111}}\\+a_{1222}a_{1111}\sqrt{a_{2222}}+a_{2222}a_{1111}\sqrt{6a_{1221}-2\sqrt{a_{1111}a_{2222}}}.
 \end{cases}$
	\end{itemize}
	\end{theorem}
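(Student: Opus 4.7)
I would follow the same reduction used in Theorem~\ref{th:34}, substituting Lemma~\ref{le:24}(i) for Lemma~\ref{le:24}(ii) to upgrade sufficient conditions to a full characterization. By Lemma~\ref{le:21} it suffices to treat $x = (x_1, x_2)^\top$ with $x_1, x_2 \geq 0$ and $x_1 + x_2 = 1$. The boundary cases give $\mathcal{A}x^4 = a_{1111}$ or $a_{2222}$, both positive by hypothesis. For the interior, dividing $\mathcal{A}x^4$ by $x_2^4$ and setting $t = x_1/x_2 > 0$ reduces copositivity of $\mathcal{A}$ to the nonnegativity on $(0, \infty)$ of
\begin{equation*}
g(t) = a_{1111}t^4 + 4a_{1211}t^3 + 6a_{1221}t^2 + 4a_{1222}t + a_{2222}.
\end{equation*}

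With $a = a_{1111}$, $b = 4a_{1211}$, $c = 6a_{1221}$, $d = 4a_{1222}$, $e = a_{2222}$, the parameters of Lemma~\ref{le:24} become
\begin{equation*}
\alpha = \frac{4a_{1211}}{a_{1111}^{3/4}a_{2222}^{1/4}},\qquad \beta = \frac{6a_{1221}}{\sqrt{a_{1111}a_{2222}}},\qquad \gamma = \frac{4a_{1222}}{a_{1111}^{1/4}a_{2222}^{3/4}}.
\end{equation*}
Lemma~\ref{le:24}(i) splits on the sign of $\beta \pm 2$ and $\beta - 6$; since $\beta = 6$ amounts to $a_{1221} = \sqrt{a_{1111}a_{2222}}$ and $\beta = -2$ to $a_{1221} = -\frac{1}{3}\sqrt{a_{1111}a_{2222}}$, the three regimes of the lemma align exactly with the three cases of the theorem. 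Routine scaling converts the auxiliary conditions: $\alpha + \gamma > 0$ becomes $a_{1211}\sqrt{a_{2222}} + a_{1222}\sqrt{a_{1111}} > 0$ after multiplying by $a_{1111}^{3/4}a_{2222}^{3/4}$, and the condition $\alpha, \gamma > 0$ of case~(3) reduces to $a_{1211} > 0$ and $a_{1222} > 0$.

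The central calculation is converting the sign of $\Delta$ into a polynomial inequality in the $a_{ijkl}$. Direct computation shows that $\tfrac{1}{12}a_{1111}a_{2222}(\beta^2 - 3\alpha\gamma + 12) = 3a_{1221}^2 - 4a_{1211}a_{1222} + a_{1111}a_{2222}$ and that $\tfrac{1}{432}(a_{1111}a_{2222})^{3/2}(72\beta + 9\alpha\beta\gamma - 2\beta^3 - 27\alpha^2 - 27\gamma^2) = a_{1111}a_{1221}a_{2222} + 2a_{1211}a_{1221}a_{1222} - a_{1221}^3 - a_{1111}a_{1222}^2 - a_{1211}^2a_{2222}$. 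Cubing the first and squaring the second, the residual scalars combine to a factor of $27$, so $\Delta \leq 0$ becomes exactly the $(\cdot)^3 \leq 27(\cdot)^2$ comparison appearing in all three cases, mirroring the $I^3 - 27J^2$ structure already computed in Theorem~\ref{th:31}.

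The main obstacle is rewriting $\mu \leq 0$ and $\eta \leq 0$ in tensor form, since both mix fractional powers of $a_{1111}$ and $a_{2222}$. For $\mu = (\alpha - \gamma)^2 - 16(\alpha + \beta + \gamma + 2)$, multiplication by the positive factor $\tfrac{1}{16}a_{1111}^{3/2}a_{2222}^{3/2}$ converts $(\alpha - \gamma)^2$ into $a_{1211}^2 a_{2222} - 2a_{1211}a_{1222}\sqrt{a_{1111}a_{2222}} + a_{1222}^2 a_{1111}$ and converts $16(\alpha + \beta + \gamma + 2)$ into $4a_{1211}a_{1111}^{3/4}a_{2222}^{5/4} + 6a_{1111}a_{1221}a_{2222} + 4a_{1222}a_{1111}^{5/4}a_{2222}^{3/4} + 2a_{1111}^{3/2}a_{2222}^{3/2}$, yielding the inequality of case~(2) with its genuine mixed radicals on both sides. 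For $\eta$, I would first rewrite $\sqrt{\beta - 2} = (6a_{1221} - 2\sqrt{a_{1111}a_{2222}})^{1/2}(a_{1111}a_{2222})^{-1/4}$, which is real because case~(3) assumes $\beta > 6$, and then clear denominators by the positive factor $(6a_{1221} + 2\sqrt{a_{1111}a_{2222}})(a_{1111}a_{2222})^{5/4}$ to recover the final displayed inequality. Throughout I would verify that each multiplier is positive so that inequality directions are preserved; assembling the three case-equivalences with Lemma~\ref{le:24}(i) then yields the theorem.
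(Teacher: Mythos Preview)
Your proposal is correct and follows essentially the same route as the paper's proof: reduce via Lemma~\ref{le:21} to the univariate quartic $g(t)$, invoke Lemma~\ref{le:24}(i) with the same $\alpha,\beta,\gamma$, and translate each of $\Delta$, $\alpha+\gamma$, $\mu$, $\eta$ back into tensor-entry inequalities by clearing positive scalar factors. Your scaling computations for $\Delta$ and $\mu$ match the paper's exactly; the only slip is the stated clearing factor for $\eta$ (you wrote $(6a_{1221}+2\sqrt{a_{1111}a_{2222}})(a_{1111}a_{2222})^{5/4}$, whereas one needs a factor containing $\sqrt{\beta-2}$, as the paper does with $\tfrac{(a_{1111}a_{2222})^{7/4}\sqrt{\beta-2}}{16}$), but this is a minor bookkeeping detail and does not affect the argument.
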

\begin{proof} Using the similar proof technique of Theorem \ref{th:34}, we only need consider the nonnegativity of the polynomial,
		$$g(t)=a_{1111}t^4+4a_{1211}t^3+6a_{1221}t^2+4a_{1222}t+a_{2222}$$
where $t=\frac{x_1}{x_2}$. Let $$\alpha=4a_{1211}a_{1111}^{-\frac34}a_{2222}^{-\frac14},\ \beta=6a_{1221}a_{1111}^{-\frac12}a_{2222}^{-\frac12},\ \gamma=4a_{1222}a_{1111}^{-\frac14}a_{2222}^{-\frac34}.$$
Then $$\begin{aligned}
	      \Delta=&4(\beta^2-3\alpha\gamma+12)^3-(72\beta+9\alpha\beta\gamma-2\beta^3-27\alpha^2-27\gamma^2)^2,\\
	            =&\frac{4\times12^3}{a_{1111}^3a_{2222}^3}[(3a_{1221}^2-4a_{1211}a_{1222}+a_{1111}a_{2222})^3- 27(a_{1111}a_{1221}a_{2222}\\
&\ \  +2a_{1211}a_{1221}a_{1222} -a_{1221}^3-a_{1111}a_{1222}^2-a_{1211}^2a_{2222})^2], \\
	     \end{aligned}$$
$$\begin{aligned}
\mu=&(\alpha-\gamma)^2-16(\alpha+\beta+\gamma+2),\\
=&\frac{16}{(a_{1111}a_{2222})^{\frac32}} [(a_{1211}\sqrt{a_{2222}}-a_{1222}\sqrt{a_{1111}})^2-( 4a_{1211}a_{1111}^{\frac34}a_{2222}^{\frac54} \\
&+6a_{1111}a_{1211}a_{2222}+4a_{1222}a_{1111}^{\frac54}a_{2222}^{\frac34}+2a_{1111}^{\frac32}a_{2222}^{\frac32})]\\
 =&\frac{16}{(a_{1111}a_{2222})^{\frac32}} [(a_{1211}^2a_{2222}-2a_{1211}a_{1222}\sqrt{a_{1111}a_{2222}}+a_{1222}^2a_{1111})\\
 &-( 4a_{1211}a_{1111}^{\frac34}a_{2222}^{\frac54} +6a_{1111}a_{1211}a_{2222}+4a_{1222}a_{1111}^{\frac54}a_{2222}^{\frac34}+2a_{1111}^{\frac32}a_{2222}^{\frac32})],
\end{aligned}$$
$$\begin{aligned}
\eta=&(\alpha-\gamma)^2-\frac{4(\beta+2)}{\sqrt{\beta-2}}(\alpha+\gamma+4\sqrt{\beta-2})\\
    =&\frac{16}{(a_{1111}a_{2222})^{\frac74}\sqrt{\beta-2}}[(a_{1211}\sqrt{a_{2222}}-a_{1222}\sqrt{a_{1111}})^2\sqrt{6a_{1221}-2\sqrt{a_{1111}a_{2222}}}\\
    &- (6a_{1221}+2\sqrt{a_{1111}a_{2222}})(a_{1211}a_{2222}\sqrt{a_{1111}}+a_{1222}a_{1111}\sqrt{a_{2222}}\\
    &+a_{2222}a_{1111}\sqrt{6a_{1221}-2\sqrt{a_{1111}a_{2222}}})].
\end{aligned}$$
Thus, the assumption (1) means that (using the inequality $a_{1221}\leq -\frac13\sqrt{a_{1111}a_{2222}}$) $$\beta=6a_{1221}a_{1111}^{-\frac12}a_{2222}^{-\frac12}\leq 6\times(-\frac13\sqrt{a_{1111}a_{2222}})a_{1111}^{-\frac12}a_{2222}^{-\frac12}=-2, $$ $\Delta\leq0$ and
$$\alpha+\gamma=\frac{4}{(a_{1111}a_{2222})^\frac34} (a_{1211}\sqrt{a_{2222}}+a_{1222}\sqrt{a_{1111}})>0.$$
Similarly, by using a simple calculation, we also have\begin{itemize}
		\item[(2)]\ \ \ $-2\leq \beta\leq 6\ \mbox{ and } \begin{cases} \Delta\leq 0\ &\mbox{ and }\ \alpha+\gamma>0\\ & or \\ \Delta\geq 0\ &\mbox{ and }\ \mu\leq 0\end{cases}$\\
	\item[(3)]\ \ \ $\beta>6\ \mbox{ and } \begin{cases} \Delta\leq 0\ &\mbox{ and }\ \alpha+\gamma>0\\ & or \\ \alpha>0\ &\mbox{ and }\ \gamma>0\\ & or  \\ \Delta\geq 0\ &\mbox{ and }\eta\leq 0.\end{cases}$
\end{itemize}
Thus, the conclusions directly follow from Lemma \ref{le:24} (i), as required.
	\end{proof}	
Now we give several simpler sufficient conditions of (strictly) copositive tensors	

\begin{theorem} \label{th:36} Let $\mathcal{A}$ be a symmetric tensor of order $4$ and dimension $2$. Assume that
	$$\begin{aligned}
	&\ a_{1111}\geq 0\ (>0),a_{2222}\geq 0\ (>0), 	a_{1112}\geq 0\ (>0) ,\ a_{2221}\geq0(>0),\\ &3a_{1221}+\sqrt{a_{1111}a_{2222}}+4\sqrt{a_{2111}a_{1222}} \geq 0\ (>0).
	\end{aligned}$$
	Then $\mathcal{A}$ is (strictly) copositive.
\end{theorem}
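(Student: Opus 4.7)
The plan is to reduce the problem to a bound on $\mathcal{A}x^4$ via the arithmetic--geometric mean inequality. By symmetry of $\mathcal{A}$ we have $a_{1112}=a_{2111}=a_{1211}$ and $a_{2221}=a_{1222}$, so the hypotheses read: $a_{1111},a_{2222},a_{1211},a_{1222}\geq 0$ (or $>0$) and
$$3a_{1221}+\sqrt{a_{1111}a_{2222}}+4\sqrt{a_{1211}a_{1222}}\geq 0\ (>0).$$
First I would invoke Lemma \ref{le:21} to reduce to $x=(x_1,x_2)^\top\geq 0$ with $x_1+x_2=1$. The boundary subcases $x_1=0$ (where $\mathcal{A}x^4=a_{2222}x_2^4$) and $x_2=0$ (where $\mathcal{A}x^4=a_{1111}x_1^4$) are disposed of immediately by the diagonal hypotheses.

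For the interior case $x_1,x_2>0$, apply AM--GM separately to the two ``outer'' pairs of terms. Using $a_{1111},a_{2222}\geq 0$ and $a_{1211},a_{1222}\geq 0$ to extract square roots,
$$a_{1111}x_1^4+a_{2222}x_2^4\ \geq\ 2\sqrt{a_{1111}a_{2222}}\,x_1^2x_2^2,$$
$$4a_{1211}x_1^3x_2+4a_{1222}x_1x_2^3\ \geq\ 2\sqrt{16\,a_{1211}a_{1222}\,x_1^4x_2^4}\ =\ 8\sqrt{a_{1211}a_{1222}}\,x_1^2x_2^2.$$
Adding these two inequalities to the middle term $6a_{1221}x_1^2x_2^2$ of the expansion \eqref{eq:31} yields
$$\mathcal{A}x^4\ \geq\ 2\bigl(3a_{1221}+\sqrt{a_{1111}a_{2222}}+4\sqrt{a_{1211}a_{1222}}\bigr)x_1^2x_2^2,$$
and the right--hand side is $\geq 0$ (respectively $>0$, since then $x_1^2x_2^2>0$) by hypothesis, giving copositivity (resp.\ strict copositivity).

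Essentially no obstacle arises beyond spotting that AM--GM is exactly the right tool to telescope the five coefficients of $\mathcal{A}x^4$ into the single compact inequality stated in the theorem. The only pitfalls are bookkeeping ones: keeping the symmetric--index identifications straight and ensuring the factor of $2$ and $8$ come out correctly so that the bracketed expression agrees verbatim with the hypothesis. The strict case requires no separate argument beyond observing that the diagonal hypotheses rule out the boundary and the strict form of the last hypothesis rules out the interior.
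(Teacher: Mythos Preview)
Your proof is correct, but it takes a different route from the paper's. The paper rewrites
\[
\mathcal{A}x^4=\bigl(\sqrt{a_{1111}}\,x_1^2-\sqrt{a_{2222}}\,x_2^2\bigr)^2+2x_1x_2\,p(x_1,x_2),
\]
where $p(x_1,x_2)=2a_{1112}x_1^2+(3a_{1221}+\sqrt{a_{1111}a_{2222}})x_1x_2+2a_{2221}x_2^2$, and then invokes Lemma~\ref{le:22} (the Bernstein--B\'ezier copositivity test for a quadratic on $[0,1]$) to conclude $p\geq 0$. You instead apply AM--GM directly to the pairs $(a_{1111}x_1^4,\,a_{2222}x_2^4)$ and $(4a_{1211}x_1^3x_2,\,4a_{1222}x_1x_2^3)$, bypassing both the completing-the-square identity and Lemma~\ref{le:22} entirely.

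The two arguments are closely related---the paper's square extraction is exactly the equality case of your first AM--GM step, and the sufficiency direction of Lemma~\ref{le:22} is itself an AM--GM bound---so you have in effect unpacked and streamlined the paper's machinery. Your version is shorter and more self-contained. The paper's version, on the other hand, produces an \emph{exact} decomposition of $\mathcal{A}x^4$ rather than just a lower bound; this identity is what is exploited immediately afterward in Corollary~\ref{co:38}, where one evaluates at the point where the square vanishes to extract a necessary condition. Your inequality-only approach would not yield that corollary without reintroducing the identity.
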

\begin{proof}  It follows from Lemma \ref{le:21} that we can restrict $x=(x_1,x_2)^\top$ to $$x_1\geq0,\ x_2\geq0,\ \|x\|=x_1+x_2=1.$$
Without loss of generality, we may assume $x_1=t$ and $x_2=1-t$ for all $t\in[0,1]$. Then

\begin{equation}\label{eq:35}\begin{aligned}\mathcal{A}x^4&=\sum_{i,j,k,l=1}^2a_{ijkl}x_ix_jx_kx_l\\
&=a_{1111}t^4+4a_{1112}t^3(1-t)+6a_{1221}t^2(1-t)^2\\
&\ \ \ \ \ +4a_{2221}t(1-t)^3+a_{2222}(1-t)^4.\end{aligned}
\end{equation}
 For $t\in(0,1)$,
rewritten \eqref{eq:35} as follow,
$$\begin{aligned}
\mathcal{A}x^4&=\left(\sqrt{a_{1111}}t^2-\sqrt{a_{2222}}(1-t)^2\right)^2\\
&\ \ \ +2t(1-t)\left(2a_{1112}t^2+(3a_{1221}+\sqrt{a_{1111}a_{2222}})t(1-t)+2a_{2221}(1-t)^2\right).
\end{aligned}$$
Let $$p(t)=2a_{1112}t^2+(3a_{1221}+\sqrt{a_{1111}a_{2222}})t(1-t)+2a_{2221}(1-t)^2.$$
Since the inquality	$3a_{1221}+\sqrt{a_{1111}a_{2222}}+4\sqrt{a_{2111}a_{1222}} \geq 0\ (>0)$ means that $$\frac{3a_{1221}+\sqrt{a_{1111}a_{2222}}}2+2\sqrt{a_{1112}a_{2221}} \geq 0\ (>0)$$
It follows from Lemma \ref{le:22} that $p(t)\geq 0\ (>0),$ and hence,
$$\mathcal{A}x^4=\left(\sqrt{a_{1111}}t^2-\sqrt{a_{2222}}(1-t)^2\right)^2+2t(1-t)p(t)\geq 0\ (>0).$$
Clearly, if $t=0$ or $t=1$, $\mathcal{A}x^4=a_{2222}$ or $a_{1111}$.
Thus, $$\mathcal{A}x^4\geq 0\ (>0) \mbox{ for all }x\geq0\ \mbox{ with }\  \|x\|=1.$$
So, the conclusions directly follow from Lemma \ref{le:21}, as required.
\end{proof}	
	
\begin{theorem} \label{th:37} Let $\mathcal{A}$ be a symmetric tensor of order $4$ and dimension $2$. Assume that
	$$\begin{aligned}
	&\ a_{1111}\geq 0\ (>0),a_{2222}\geq 0\ (>0), \\&\	a_{1112}+\sqrt[4]{a_{1111}^3a_{2222}}\geq 0\ (>0) ,\ a_{2221}+\sqrt[4]{a_{1111}a_{2222}^3}\geq0\ (>0),\\ &3(a_{1221}-\sqrt{a_{1111}a_{2222}})\\
	&\ \ +4\sqrt{\left(a_{1112}+\sqrt[4]{a_{1111}^3a_{2222}}\right)\left(a_{2221}+\sqrt[4]{a_{1111}a_{2222}^3}\right)} \geq 0\ (>0).
	\end{aligned}$$
	Then $\mathcal{A}$ is (strictly) copositive.
\end{theorem}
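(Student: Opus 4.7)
The plan is to adapt the proof of Theorem~\ref{th:36}, but replace its difference-of-squares decomposition with a sum-of-three-squares decomposition that inserts a cross term $p\,t(1-t)$, chosen so that the residual quadratic Bernstein--B\'ezier factor picks up exactly the coefficients appearing in the three hypotheses. By Lemma~\ref{le:21} I can normalize $x_1+x_2=1$ with $x_i\ge 0$, parametrize $x_1=t$, $x_2=1-t$ for $t\in[0,1]$, and note that at the endpoints $t=0,1$ the polynomial $\mathcal{A}x^4$ reduces to $a_{2222}$ or $a_{1111}$, which are non-negative (resp.\ strictly positive) by hypothesis.

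For $t\in(0,1)$ I would write
\[
\mathcal{A}x^4 = \bigl(\sqrt{a_{1111}}\,t^2 + p\,t(1-t) + \sqrt{a_{2222}}\,(1-t)^2\bigr)^2 + 2t(1-t)\,q(t),
\]
and fix $p$ by matching coefficients: the $t^3(1-t)$ term of the square is $2p\sqrt{a_{1111}}$, and comparing with the shift $\sqrt[4]{a_{1111}^3 a_{2222}}=\sqrt{a_{1111}}\cdot\sqrt[4]{a_{1111}a_{2222}}$ present in the hypothesis forces $p=-2\sqrt[4]{a_{1111}a_{2222}}$. This same $p$ automatically produces the matching shift $\sqrt[4]{a_{1111}a_{2222}^3}$ on the $t(1-t)^3$ term, and contributes $p^2+2\sqrt{a_{1111}a_{2222}}=6\sqrt{a_{1111}a_{2222}}$ to the coefficient of $t^2(1-t)^2$, leaving
\begin{align*}
q(t) &= 2\bigl(a_{1112}+\sqrt[4]{a_{1111}^3 a_{2222}}\bigr)t^2 + 3\bigl(a_{1221}-\sqrt{a_{1111}a_{2222}}\bigr)t(1-t) \\
&\quad + 2\bigl(a_{2221}+\sqrt[4]{a_{1111}a_{2222}^3}\bigr)(1-t)^2.
\end{align*}

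The three hypotheses then map one-to-one onto the Bernstein--B\'ezier conditions $a\ge 0$, $c\ge 0$, $b+\sqrt{ac}\ge 0$ of Lemma~\ref{le:22} applied to $q(t)$ (the last obtained after pulling out a factor of $2$ via $\sqrt{4AC}=2\sqrt{AC}$ and multiplying through by $2$). Lemma~\ref{le:22} then gives $q(t)\ge 0$ (resp.\ $>0$) on $[0,1]$; combined with the non-negativity of the outer square and of $2t(1-t)$, this yields $\mathcal{A}x^4\ge 0$ (resp.\ $>0$) on the simplex. The only conceptual step is guessing the correct value $p=-2\sqrt[4]{a_{1111}a_{2222}}$; once this is identified, the rest of the argument is a routine expansion of $(A+B+C)^2$ together with the boundary bookkeeping already noted for the strict case.
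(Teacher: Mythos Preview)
Your proof is correct and essentially identical to the paper's. The paper writes the decomposition as
\[
\mathcal{A}x^4=\bigl(\sqrt[4]{a_{1111}}\,t-\sqrt[4]{a_{2222}}\,(1-t)\bigr)^4+t(1-t)\,p(t),
\]
but since $\sqrt{a_{1111}}\,t^2-2\sqrt[4]{a_{1111}a_{2222}}\,t(1-t)+\sqrt{a_{2222}}\,(1-t)^2=\bigl(\sqrt[4]{a_{1111}}\,t-\sqrt[4]{a_{2222}}\,(1-t)\bigr)^2$, your squared quadratic with $p=-2\sqrt[4]{a_{1111}a_{2222}}$ is exactly this fourth power, and $2q(t)$ is the paper's $p(t)$; the remaining application of Lemma~\ref{le:22} is the same.
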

\begin{proof} Using the same argumentation technique,  	For $t\in(0,1)$,
rewritten \eqref{eq:35} as follow,
	$$\begin{aligned}
	\mathcal{A}x^4&=\left(\sqrt[4]{a_{1111}}t-\sqrt[4]{a_{2222}}(1-t)\right)^4+4\left(a_{1112}+\sqrt[4]{a_{1111}^3a_{2222}}\right)t^3(1-t)\\
	&\ \ \ +6\left(a_{1221}-\sqrt{a_{1111}a_{2222}}\right)t^2(1-t)^2+4\left(a_{2221}+\sqrt[4]{a_{1111}a_{2222}^3}\right)t(1-t)^3\\
	&= \left(\sqrt[4]{a_{1111}}t-\sqrt[4]{a_{2222}}(1-t)\right)^4+t(1-t)p(t),
	\end{aligned}$$
where $$\begin{aligned}p(t)=&4\left(a_{1112}+\sqrt[4]{a_{1111}^3a_{2222}}\right)t^2+6\left(a_{1221}-\sqrt{a_{1111}a_{2222}}\right)t(1-t)\\ &\ +4\left(a_{2221}+\sqrt[4]{a_{1111}a_{2222}^3}\right)(1-t)^2.\end{aligned}$$

The aussumptions assure $p(t)\geq 0\ (>0)$ for all $t\in(0,1)$ by Lemma \ref{le:22}.
It is obvious that $\mathcal{A}x^4=a_{2222}$ or $a_{1111}$ for  $t=0$ or $t=1$.
Thus, $$\mathcal{A}x^4\geq 0\ (>0) \mbox{ for all }x\geq0\ \mbox{ with }\  \|x\|=1.$$
Therefore, $\mathcal{A}$ is (strictly) copositive.
\end{proof}	

From the proving process of Theorems \ref{th:36} and \ref{th:37}, the following conclusion is proved easily.

\begin{corollary} \label{co:38} Let $\mathcal{A}$ be a symmetric and strictly copositive tensor of order $4$ and dimension $2$. Then
	\begin{equation}\label{eq:36}
2a_{1112}\sqrt{a_{2222}}+(3a_{1221}+\sqrt{a_{1111}a_{2222}})\sqrt[4]{a_{1111}a_{2222}}+2a_{1222}\sqrt{a_{1111}}>0.
\end{equation}
\end{corollary}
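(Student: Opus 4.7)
The plan is to specialize the algebraic identity used in the proof of Theorem \ref{th:36}. Strict copositivity immediately forces $a_{1111}>0$ and $a_{2222}>0$ (by testing with $(1,0)^\top$ and $(0,1)^\top$), so all fourth roots in the statement make sense. Recalling that for $x=(t,1-t)^\top$ with $t\in(0,1)$ one has
$$\mathcal{A}x^4 = \left(\sqrt{a_{1111}}\,t^2-\sqrt{a_{2222}}\,(1-t)^2\right)^2+2t(1-t)\,p(t),$$
where $p(t)=2a_{1112}t^2+(3a_{1221}+\sqrt{a_{1111}a_{2222}})t(1-t)+2a_{1222}(1-t)^2$, the idea is to choose $t$ so that the first square vanishes and then read off a sign condition on $p(t)$.

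The first square vanishes precisely when $\sqrt[4]{a_{1111}}\,t=\sqrt[4]{a_{2222}}\,(1-t)$, i.e.\ for
$$t^\ast=\frac{\sqrt[4]{a_{2222}}}{\sqrt[4]{a_{1111}}+\sqrt[4]{a_{2222}}}, \qquad 1-t^\ast=\frac{\sqrt[4]{a_{1111}}}{\sqrt[4]{a_{1111}}+\sqrt[4]{a_{2222}}}.$$
Since $a_{1111},a_{2222}>0$, we have $t^\ast\in(0,1)$, and the vector $x^\ast=(t^\ast,1-t^\ast)^\top$ is nonnegative and nonzero. Strict copositivity of $\mathcal{A}$ therefore yields $\mathcal{A}(x^\ast)^4>0$, and because the squared term is zero at $x^\ast$, this reduces to $2t^\ast(1-t^\ast)\,p(t^\ast)>0$. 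As $2t^\ast(1-t^\ast)>0$, we conclude $p(t^\ast)>0$.

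The final step is just substitution and clearing denominators. Plugging $t^\ast$ and $1-t^\ast$ into $p$ and multiplying by the positive factor $(\sqrt[4]{a_{1111}}+\sqrt[4]{a_{2222}})^2$ turns $p(t^\ast)>0$ into
$$2a_{1112}\sqrt{a_{2222}}+(3a_{1221}+\sqrt{a_{1111}a_{2222}})\sqrt[4]{a_{1111}a_{2222}}+2a_{1222}\sqrt{a_{1111}}>0,$$
which is exactly the claimed inequality \eqref{eq:36}. No obstacle arises beyond the bookkeeping of this substitution; the real content of the proof is the lucky choice of $t^\ast$ which makes the ``square'' term in the decomposition of $\mathcal{A}x^4$ disappear, leaving only the Bernstein-Bezier polynomial $p$ whose positivity is already linked by Lemma \ref{le:22} to the expression in the statement.
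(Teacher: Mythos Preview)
Your proof is correct and follows essentially the same approach as the paper: you use the decomposition from Theorem~\ref{th:36}, pick the same point $t^\ast=\dfrac{\sqrt[4]{a_{2222}}}{\sqrt[4]{a_{1111}}+\sqrt[4]{a_{2222}}}$ that kills the squared term, and read off $p(t^\ast)>0$. In fact your write-up is slightly more explicit than the paper's, since you spell out the multiplication by $(\sqrt[4]{a_{1111}}+\sqrt[4]{a_{2222}})^2$ that turns $p(t^\ast)>0$ into \eqref{eq:36}, whereas the paper simply says ``the desired conclusion follows.''
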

\begin{proof} It follows from the strict copositivity of  $\mathcal{A}$ that $a_{1111}>0$ and $a_{2222}>0.$
For $t\in(0,1)$ and $x=(t,1-t)^\top$, we  have
$$\begin{aligned}
\mathcal{A}x^4&=\left(\sqrt{a_{1111}}t^2-\sqrt{a_{2222}}(1-t)^2\right)^2\\
&\ \ \ +2t(1-t)\left(2a_{1112}t^2+(3a_{1221}+\sqrt{a_{1111}a_{2222}})t(1-t)+2a_{2221}(1-t)^2\right).
\end{aligned}$$
Take $t_0=\frac{\sqrt[4]{a_{2222}}}{\sqrt[4]{a_{1111}}+\sqrt[4]{a_{2222}}}$. Then $x^0=(t_0,1-t_0)^\top$, i.e., $$x^0=\left(\frac{\sqrt[4]{a_{2222}}}{\sqrt[4]{a_{1111}}+\sqrt[4]{a_{2222}}}, \frac{\sqrt[4]{a_{1111}}}{\sqrt[4]{a_{1111}}+\sqrt[4]{a_{2222}}}\right)^\top,$$ and hence,
$$\begin{aligned}
\mathcal{A}(x^0)^4&=2t_0(1-t_0)(2a_{1112}t_0^2+(3a_{1221}+\sqrt{a_{1111}a_{2222}})t_0(1-t_0)\\
&\ \ \ \ +2a_{2221}(1-t_0)^2)>0.
\end{aligned}$$ Namely, $$2a_{1112}t_0^2+(3a_{1221}+\sqrt{a_{1111}a_{2222}})t_0(1-t_0)\\
 +2a_{2221}(1-t_0)^2>0.$$
So, the desired conclusion follows.
\end{proof}

Now we give a simpler sufficient conditions of (strict) copositivity of 4th order 3 dimensional tensors by reducing dimensions.

\begin{theorem} \label{th:39} Let $\mathcal{A}$ be a symmetric tensor of order $4$ and dimension $3$. Assume that
	$$\begin{aligned}
	&a_{1111}\geq 0\ (>0),\ a_{2222}\geq 0\ (>0),\ a_{3333}\geq 0\ (>0)\\
&a_{1123}\geq 0\ (>0),\ a_{1223}\geq 0\ (>0),\ a_{1233}\geq 0\ (>0)\\
&\eta_1=2a_{1112}+\sqrt[4]{a_{1111}^3a_{2222}}\geq 0\ (>0) ,\ \mu_1=2a_{1222}+\sqrt[4]{a_{1111}a_{2222}^3}\geq0\ (>0),\\
&\eta_2=2a_{1113}+\sqrt[4]{a_{1111}^3a_{3333}}\geq 0\ (>0) ,\ \mu_2=2a_{1333}+\sqrt[4]{a_{1111}a_{3333}^3}\geq0\ (>0),\\
&\eta_3=2a_{2223}+\sqrt[4]{a_{2222}^3a_{3333}}\geq 0\ (>0) ,\ \mu_3=2a_{2333}+\sqrt[4]{a_{2222}a_{3333}^3}\geq0\ (>0),\\
&\theta_1=3(2a_{1122}-\sqrt{a_{1111}a_{2222}})+4\sqrt{\eta_1\mu_1} \geq 0\ (>0)\\
&\theta_2=3(2a_{1133}-\sqrt{a_{1111}a_{3333}})+4\sqrt{\eta_2\mu_2} \geq 0\ (>0)\\
&\theta_3=3(2a_{2233}-\sqrt{a_{2222}a_{3333}})+4\sqrt{\eta_3\mu_3} \geq 0\ (>0).
	\end{aligned}$$
	Then $\mathcal{A}$ is (strictly) copositive.
\end{theorem}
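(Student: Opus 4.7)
The plan is to apply the ``reducing dimensions'' strategy foreshadowed after Theorem \ref{th:33}, namely to decompose $\mathcal{A}x^{4}$ into three quartic bi-variate forms, each of which is governed by Theorem \ref{th:37}, plus a three-variable remainder whose sign is controlled by the hypotheses $a_{1123},a_{1223},a_{1233}\geq 0$. First I would invoke Lemma \ref{le:21} to restrict to $x\geq 0$ with $\|x\|=x_1+x_2+x_3=1$, and fully expand $\mathcal{A}x^{4}$ using the symmetry of $\mathcal{A}$; this produces the three diagonal monomials $a_{iiii}x_i^{4}$, six pure-pair terms $4a_{iiij}x_i^{3}x_j$ with $i\neq j$, three mixed-pair terms $6a_{iijj}x_i^{2}x_j^{2}$, and the three genuinely three-variable terms $12a_{1123}x_1^{2}x_2x_3$, $12a_{1223}x_1x_2^{2}x_3$, $12a_{1233}x_1x_2x_3^{2}$.

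Second, I would split each diagonal $a_{iiii}x_i^{4}$ equally between the two pairs that contain the index $i$, obtaining the decomposition
$$\mathcal{A}x^{4}=Q_{12}(x_1,x_2)+Q_{13}(x_1,x_3)+Q_{23}(x_2,x_3)+12\bigl(a_{1123}x_1^{2}x_2x_3+a_{1223}x_1x_2^{2}x_3+a_{1233}x_1x_2x_3^{2}\bigr),$$
where for $\{i,j\}\subset\{1,2,3\}$
$$Q_{ij}(x_i,x_j)=\tfrac12 a_{iiii}x_i^{4}+4a_{iiij}x_i^{3}x_j+6a_{iijj}x_i^{2}x_j^{2}+4a_{ijjj}x_ix_j^{3}+\tfrac12 a_{jjjj}x_j^{4}$$
is viewed as the polynomial associated to a 4th order 2-dimensional symmetric tensor $B^{(ij)}$ with $b^{(ij)}_{iiii}=\tfrac12 a_{iiii}$, $b^{(ij)}_{jjjj}=\tfrac12 a_{jjjj}$, and the off-diagonal entries inherited from $\mathcal{A}$. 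The three-variable remainder is non-negative on $\mathbb{R}^{3}_{+}$ because of the sign hypotheses on $a_{1123},a_{1223},a_{1233}$, so copositivity of $\mathcal{A}$ reduces to showing $Q_{ij}(x_i,x_j)\geq 0$ on the non-negative orthant of $\mathbb{R}^{2}$.

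Third, I would verify by direct computation that the hypotheses $\eta_i,\mu_i,\theta_i\geq 0$ are exactly the hypotheses required by Theorem \ref{th:37} applied to $B^{(ij)}$. For the pair $(1,2)$ the key identities are
$$b^{(12)}_{1112}+\sqrt[4]{(b^{(12)}_{1111})^{3}b^{(12)}_{2222}}=a_{1112}+\tfrac12\sqrt[4]{a_{1111}^{3}a_{2222}}=\tfrac{\eta_1}{2},$$
$$b^{(12)}_{1222}+\sqrt[4]{b^{(12)}_{1111}(b^{(12)}_{2222})^{3}}=\tfrac{\mu_1}{2},$$
and, using $\sqrt{b^{(12)}_{1111}b^{(12)}_{2222}}=\tfrac12\sqrt{a_{1111}a_{2222}}$,
$$3\bigl(b^{(12)}_{1122}-\sqrt{b^{(12)}_{1111}b^{(12)}_{2222}}\bigr)+4\sqrt{\tfrac{\eta_1}{2}\cdot\tfrac{\mu_1}{2}}=\tfrac{\theta_1}{2},$$
with analogous identities for the pairs $(1,3)$ and $(2,3)$. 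Since every right-hand side is non-negative by assumption, Theorem \ref{th:37} applied to each $B^{(ij)}$ yields $Q_{ij}(x_i,x_j)\geq 0$, and summing proves $\mathcal{A}x^{4}\geq 0$.

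Finally, for the strict version I would replace every ``$\geq$'' by ``$>$'' in the hypotheses; then each $B^{(ij)}$ is strictly copositive by Theorem \ref{th:37}, and a short case analysis on how many of $x_1,x_2,x_3$ vanish (using $a_{iiii}>0$ when exactly one coordinate is non-zero, and strict positivity of the appropriate $Q_{ij}$ when exactly one coordinate is zero) gives $\mathcal{A}x^{4}>0$ on $\mathbb{R}^{3}_{+}\setminus\{0\}$. I expect the only real obstacle to be the arithmetic bookkeeping in the third step---in particular, confirming that the factor $\tfrac12$ arising from the equal splitting of the diagonals produces precisely the coefficients $2a_{iiij}$ and $2a_{iijj}$ that appear inside $\eta_i,\mu_i$ and $\theta_i$; once this normalisation is pinned down, the theorem is a direct invocation of Theorem \ref{th:37}.
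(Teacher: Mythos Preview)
Your proposal is correct and follows essentially the same route as the paper: the paper likewise splits each $a_{iiii}x_i^{4}$ in half, writes $\mathcal{A}x^{4}=g_1(x_1,x_2)+g_2(x_1,x_3)+g_3(x_2,x_3)+12(a_{1123}x_1^{2}x_2x_3+a_{1223}x_1x_2^{2}x_3+a_{1233}x_1x_2x_3^{2})$ with the same bi-variate quartics you call $Q_{ij}$, and then checks (exactly via your $\eta_i/2$, $\mu_i/2$, $\theta_i/2$ identities) that each piece satisfies the hypotheses of Theorem~\ref{th:37}. Your extra case analysis for the strict version is slightly more explicit than the paper, which simply remarks that the strict argument is the same and omits it.
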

\begin{proof} For a vector $x=(x_1,x_2,x_3)^\top$, we have
$$\begin{aligned}\mathcal{A}x^4=&a_{1111}x_1^4+a_{2222}x_2^4+a_{3333}x_3^4+4a_{1222}x_1x_2^3 +4a_{1333}x_1x_3^3\\
		& +4a_{2111}x_1^3x_2+4a_{2333}x_2x_3^3+4a_{3111}x_1^3x_3
		+4a_{3222}x_2^3x_3\\
		&+6a_{1122}x_1^2x_2^2+6a_{1133}x_1^2x_3^2+6a_{2233}x_2^2x_3^2\\
		=&(\frac12a_{1111}x_1^4+4a_{1112}x_1^3x_2+6a_{1122}x_1^2x_2^2+4a_{1222}x_1x_2^3+\frac12a_{2222}x_2^4)\\
+&(\frac12a_{1111}x_1^4+4a_{1113}x_1^3x_3+6a_{1133}x_1^2x_3^2+4a_{1333}x_1x_3^3+\frac12a_{3333}x_3^4)\\
+&(\frac12a_{2222}x_2^4+4a_{2223}x_2^3x_3+6a_{2233}x_2^2x_3^2+4a_{2333}x_2x_3^3+\frac12a_{3333}x_3^4)\\
+& 12a_{1123}x_1^2x_2x_3+12a_{1223}x_1x_2^2x_3 +12a_{1233}x_1x_2x_3^2.
		\end{aligned}$$
Let $$g_1(x_1,x_2)=\frac12a_{1111}x_1^4++4a_{1112}x_1^3x_2+6a_{1122}x_1^2x_2^2+4a_{1222}x_1x_2^3+\frac12a_{2222}x_2^4.$$
Then  $g_1(x_1,x_2)$ may be regarded as a homogeneous polynomial defined by a 4th order 2 dimensional symmetric tensor $\mathcal{B}=(b_{ijkl})$ with its entries
$$b_{1111}=\frac12a_{1111}, b_{1112}=a_{1112}, b_{1122}=a_{1122}, b_{1222}=a_{1222}, b_{2222}=\frac12a_{2222}.$$
That is, $$g_1(x_1,x_2)=\mathcal{B}y^4=\sum_{i,j,k,l=1}^2b_{ijkl}x_ix_jx_kx_l,\mbox{ for all }y=(x_1,x_2)^\top.$$
Then the assumptions imply that
$$\begin{aligned}
&b_{1111}=\frac12a_{1111}\geq 0,\ b_{2222}=\frac12a_{2222}\geq 0,\\
&b_{1112}+\sqrt[4]{b_{1111}^3b_{2222}}=a_{1112}+\frac12\sqrt[4]{a_{1111}^3a_{2222}}=\frac12\eta_1\geq 0 ,\\
&b_{1222}+\sqrt[4]{b_{1111}b_{2222}^3}=a_{1222}+\frac12\sqrt[4]{a_{1111}a_{2222}^3}=\frac12\mu_1\geq0,
	\end{aligned}$$
$$\begin{aligned}&3(b_{1122}-\sqrt{b_{1111}b_{2222}})+4\sqrt{\left(b_{1112}+\sqrt[4]{b_{1111}^3b_{2222}}\right)\left(b_{2221}+\sqrt[4]{b_{1111}b_{2222}^3}\right)}\\
 &=3(a_{1122}-\frac12\sqrt{a_{1111}a_{2222}})+4\sqrt{\frac14\eta_1\mu_1}\\&=\frac12(3(2a_{1122}-\sqrt{a_{1111}a_{2222}})+4\sqrt{\eta_1\mu_1})\\
 &=\frac12\theta_1\geq 0.
\end{aligned}$$
It follows from Theorem \ref{th:37} that the tensor $\mathcal{B}$ is  copositive, that is, $$g_1(x_1,x_2)=\mathcal{B}y^4\geq 0\mbox{ for all }y=(x_1, x_2)^\top\geq 0.$$
Similarly, we also have
$$g_2(x_1,x_3)=\frac12a_{1111}x_1^4+4a_{1113}x_1^3x_3+6a_{1133}x_1^2x_3^2+4a_{1333}x_1x_3^3+\frac12a_{3333}x_3^4\geq0,$$
$$g_3(x_2,x_3)=\frac12a_{2222}x_2^4+4a_{2223}x_2^3x_3+6a_{2233}x_2^2x_3^2+4a_{2333}x_2x_3^3+\frac12a_{3333}x_3^4\geq0.$$
Thus, for all $x=(x_1,x_2,x_3)^\top\geq0,$ we have \begin{align*}\mathcal{A}x^4=&g_1(x_1,x_2)+g_2(x_1,x_3)+g_3(x_2,x_3)\\ &+ 12a_{1123}x_1^2x_2x_3+12a_{1223}x_1x_2^2x_3 +12a_{1233}x_1x_2x_3^2\geq0,\end{align*}
that is, $\mathcal{A}$ is copositive. The proof of strict copositivity of $\mathcal{A}$ is same as the above, we omit it.
\end{proof}	

\begin{remark}
  Theorem \ref{th:39} is proved by reducing dimensions of tensor. That is, an 4th order 3 dimensional tensor is decomposed three 4th order 2 dimensional tensors, and then, analysing the copositivity of these 2 dimensional tensors to obtain the desired sufficient conditions by using Theorem \ref{th:37}. So,  distinctly sufficient conditions may be established by applying Theorems \ref{th:34}, \ref{th:35}, \ref{th:36}, respectively.
\end{remark}
	
\section{\bf Checking vacuum stability of scalar potentials}
\subsection{Vacuum stability of the scalar potential of two real scalars and the Higgs boson}
Recently, Kannike \cite{K2016,K2018} studied the vacuum stability of general scalar potentials
of a few fields. The most general scalar potential of two real scalar fields $\phi_1$ and $\phi_2$ can be expressed as
\begin{equation}\label{eq:41} V(\phi_1,\phi_2)=\lambda_{40}\phi_1^4+\lambda_{31}\phi_1^3\phi_1+\lambda_{22}\phi_1^2\phi_2^2+\lambda_{13}\phi_1\phi_2^3+\lambda_{04}\phi_2^4
=\Lambda \phi^4, \end{equation}
where $\Lambda=(\lambda_{ijkl})$ is the symmetric tensor of scalar couplings and $\phi=(\phi_1,\phi_2)^\top$ is the vector of fields. The tensor of the scalar couplings of the potential is defined by 	
\begin{equation}\label{eq:42}
\Lambda=\left(\begin{aligned}\left(\begin{array}{cc}
\lambda_{40}  & \frac14\lambda_{31}\\
\frac14\lambda_{31} & \frac16\lambda_{22}
\end{array}\right) \left(\begin{array}{cc}
\frac14\lambda_{31} & \frac16\lambda_{22}\\
\frac16\lambda_{22} & \frac14\lambda_{13}
\end{array}
\right) \\
\left(\begin{array}{cc}
\frac14\lambda_{31} & \frac16\lambda_{22}\\
\frac16\lambda_{22} & \frac14\lambda_{13}
\end{array}
\right)
\left(\begin{array}{cc}
\frac16\lambda_{22} & \frac14\lambda_{13}\\
\frac14\lambda_{13} & \lambda_{04}
\end{array}
\right)\end{aligned}\right)
	\end{equation}
that is, $$\begin{aligned}\lambda_{1111}=&\lambda_{40},\ \ \ \lambda_{2222}=\lambda_{04},\\ \lambda_{1112}=&\lambda_{1121}=\lambda_{1211}=\lambda_{2111}=\frac14\lambda_{31},\\ \lambda_{1122}=&\lambda_{1212}=\lambda_{1221}=\lambda_{2112}=\lambda_{2121}=\lambda_{2211}=
\frac16\lambda_{22},\\
 \lambda_{1222}=&\lambda_{2122}=\lambda_{2212}=\lambda_{2221}=\frac14\lambda_{13}. \end{aligned}$$
 It is known that the vacuum stability of the general scalar potential of $2$ real singlet scalar fields is equivalent to the positivity of the polynomial \eqref{eq:41} (\cite{K2016}), i.e., the positive definiteness of the tensor $\Lambda=(\lambda_{ijkl})$.
 Then it follows from  Theorem \ref{th:31} that the tensor $\Lambda$ with $\lambda_{1111}=\lambda_{40}>0$ and $\lambda_{2222}=\lambda_{04}>0$
 is positive definite if and only if (Multiply  by common multiple of denominators to make them simpler),
 $$(a)\ \ \ \left\{\begin{aligned}
&8\lambda_{40}\lambda_{22}-3\lambda_{31}^2\geq0,\\
&4(12\lambda_{40}\lambda_{04}-3\lambda_{31}\lambda_{13}+\lambda_{22}^2)^3\\
&>(72\lambda_{40}\lambda_{22}\lambda_{04}+9\lambda_{31}\lambda_{22}\lambda_{31} -2\lambda_{22}^3-27\lambda_{40}\lambda_{31}^2-27\lambda_{31}^2\lambda_{04})^2
\end{aligned}\right.$$
$$(b)\ \ \ \left\{\begin{aligned}
&8\lambda_{40}\lambda_{22}-3\lambda_{31}^2<0,\\
&16\lambda_{40}^2\lambda_{22}^2+3\lambda_{31}^4+16\lambda_{40}^2\lambda_{31}\lambda_{13}<16\lambda_{40}\lambda_{31}^2\lambda_{22}+64\lambda_{40}^3\lambda_{04},\\
&4(12\lambda_{40}\lambda_{04}-3\lambda_{31}\lambda_{13}+\lambda_{22}^2)^3\\
&>(72\lambda_{40}\lambda_{22}\lambda_{04}+9\lambda_{31}\lambda_{22}\lambda_{31} -2\lambda_{22}^3-27\lambda_{40}\lambda_{31}^2-27\lambda_{31}^2\lambda_{04})^2
\end{aligned}\right.\ \ \ $$
$$(c)\ \ \ \left\{\begin{aligned}
&8\lambda_{40}\lambda_{22}-3\lambda_{31}^2>0,\\
&16\lambda_{40}^2\lambda_{22}^2+3\lambda_{31}^4+16\lambda_{40}^2\lambda_{31}\lambda_{13}=16\lambda_{40}\lambda_{31}^2\lambda_{22}+64\lambda_{40}^3\lambda_{04},\ \ \ \ \ \ \ \ \ \\
&4\lambda_{40}\lambda_{31}\lambda_{22}=8\lambda_{40}^2\lambda_{13}+\lambda_{31}^3\\
&4(12\lambda_{40}\lambda_{04}-3\lambda_{31}\lambda_{13}+\lambda_{22}^2)^3\\
&=(72\lambda_{40}\lambda_{22}\lambda_{04}+9\lambda_{31}\lambda_{22}\lambda_{31} -2\lambda_{22}^3-27\lambda_{40}\lambda_{31}^2-27\lambda_{31}^2\lambda_{04})^2
\end{aligned}\right.$$
Therefore, one of the above cases (1) and (2) and (3)	can guarantee the vacuum stability of the general scalar potential $V(\phi_1,\phi_2)$ of $2$ real singlet scalar fields.\\


The most general scalar potential of two real scalar fields $\phi_1$
and $\phi_2$ and the Higgs doublet $\mathbf{H}$ (Kannike \cite{K2016,K2018})  is
\begin{align}
V(\phi_1,\phi_2,|H|)=& \lambda_{H}|H|^4+\lambda_{H20}|H|^2\phi_1^2+\lambda_{H11}|H|^2\phi_1\phi_2+\lambda_{H02}|H|^2\phi_2^2\nonumber\\
&\ +\lambda_{40}\phi_1^4+\lambda_{31}\phi_1^3\phi_2+\lambda_{22}\phi_1^2\phi_2^2+\lambda_{13}\phi_1\phi_2^3+\lambda_{04}\phi_2^4,\label{eq:43}\\
=& \lambda_{H}|H|^4+M^2(\phi_1,\phi_2)|H|^2+\bar{V}(\phi_1,\phi_2),\nonumber
\end{align}
where $$M^2(\phi_1,\phi_2)=\lambda_{H20}\phi_1^2+\lambda_{H11}\phi_1\phi_2+\lambda_{H02}\phi_2^2$$ and $$\bar{V}(\phi_1,\phi_2)=V(\phi_1,\phi_2,0)=\lambda_{40}\phi_1^4+\lambda_{31}\phi_1^3\phi_2+\lambda_{22}\phi_1^2\phi_2^2+\lambda_{13}\phi_1\phi_2^3+\lambda_{04}\phi_2^4.$$
Let $x=(\phi_1,\phi_2,|H|)^\top$. Then  $V(\phi_1,\phi_2,|H|)=\mathcal{V}x^4$, where $\mathcal{V}=(v_{ijkl})$ is a 4th order 3  dimensional symmetric tensor with its entries
$$\begin{aligned}v_{1111}=&\lambda_{40},\  v_{2222}=\lambda_{04},\  v_{3333}=\lambda_{H}, v_{1112}=\frac14\lambda_{31},\ v_{1222}=\frac14\lambda_{13},\\
v_{1133}=&\frac16\lambda_{H20},\ v_{1122}= \frac16\lambda_{22},\ v_{2233}=\frac16\lambda_{H02},\\
v_{1233}=&\frac1{12}\lambda_{H11},\ \  v_{ijkl}=0\mbox{ for the others}.
  \end{aligned}$$
Clearly, $\bar{V}(\phi_1,\phi_2)$ is a 4th order 2 dimensional  tensor. Let $\phi=(\phi_1,\phi_2)^\top.$ Then $\bar{V}(\phi_1,\phi_2)=\Lambda \phi^4,$ where $\Lambda$ is a symmetric tensor given by \eqref{eq:42}, which is a principal subtensor of $\mathcal{V}$.
 So, the conditions (a) and (b) and (c) exactly ensure the positive definiteness of $\Lambda$, i.e.,  $\bar{V}(\phi_1,\phi_2)=\Lambda \phi^4>0.$

On the other hand,  $M^2(\phi_1,\phi_2)=\phi^\top M\phi$, where $M$ is a symmtric matrix given by
$$M=\left(\begin{aligned}
&\lambda_{H20} &\ \frac12\lambda_{H11}\\
&\frac12\lambda_{H11} &\ \lambda_{H02}
\end{aligned}\right)$$
It is well-known that $M$ is positive definite if and only if \begin{equation}\label{eq:44}\lambda_{H20}>0,\ \lambda_{H02}>0\mbox{ and }4\lambda_{H20}\lambda_{H02}-\lambda_{H11}^2>0.\end{equation}
So, the positivity of  $V(\phi_1,\phi_2,|H|)$ is made  certain by  $\lambda_{H}>0$ and Eq.\eqref{eq:44} together with the conditions (a) or (b) or (c).

Therefore, the conditions of the vacuum stability for the scalar potential $V(\phi_1,\phi_2,|H|)$ of two real scalar fields $\phi_1$ and $\phi_2$ and the Higgs doublet $\mathbf{H}$ are
$$\lambda_{40}>0,\ \lambda_{04}>0,\ \lambda_{H}>0,\ \lambda_{H20}>0,\ \lambda_{H02}>0,\ 4\lambda_{H20}\lambda_{H02}-\lambda_{H11}^2>0 $$
and  the  inequalities systems (a) or (b) or (c).

\subsection{Vacuum stability for $\mathbb{Z}_3$ scalar dark matter}
Kannike \cite{K2016,K2018} gave another physical example defined by scalar dark matter stable
under a $\mathbb{Z}_3$ discrete group. The most general scalar quartic
potential of the {\bf SM} Higgs $\mathbf{H}_1$, an inert doublet $\mathbf{H}_2$ and a
complex singlet $\mathbf{S}$ which is symmetric under a $\mathbb{Z}_3$ group is
\begin{align}
V(h_1,h_2,s)=& \lambda_1|H_1|^4+\lambda_2|H_2|^4+\lambda_3|H_1|^2|H_2|^2+\lambda_4(H_1^{\dagger} H_2)(H_2^{\dagger} H_1)\nonumber\\
&\ +\lambda_S|S|^4+\lambda_{S1}|S|^2|H_1|^2+\lambda_{S2}|S|^2|H_2|^2\nonumber\\
&\ +\frac12(\lambda_{S12}S^2H_1^{\dagger} H_2+\lambda_{S12}^*S^{{\dagger}2}H_2^{\dagger} H_1)\nonumber\\
= & \lambda_1h_1^4+\lambda_2h_2^4+\lambda_3h_1^2h^2_2+\lambda_4\rho^2 h_1^2h_2^2\nonumber\\
&\ +\lambda_Ss^4+\lambda_{S1}s^2h_1^2+\lambda_{S2}s^2h_2^2-|\lambda_{S12}|\rho s^2h_1h_2\nonumber\\
\equiv & \lambda_Ss^4+M^2(h_1,h_2)s^2+\hat{V}(h_1,h_2),\nonumber
\end{align}
Where \begin{align}
h_1=|H_1|,\ h_2&=|H_2|, H_2^{\dagger} H_1=h_1h_2\rho e^{i\phi}, S=se^{i\phi_S}, \lambda_{S12}=-|\lambda_{S12}|,\nonumber\\
M^2(h_1,h_2)=&\lambda_{S1}h_1^2+\lambda_{S2}h_2^2-|\lambda_{S12}|\rho h_1h_2,\label{eq:45}\\
\hat{V}(h_1,h_2)=&V(h_1,h_2,0)=\lambda_1h_1^4+\lambda_2h_2^4+\lambda_3h_1^2h^2_2+\lambda_4\rho^2 h_1^2h^2_2.\label{eq:46}
\end{align}
The orbit space parameter $\rho\in[0,1]$ as implied by the Cauchy inequality $0\leq|H^\dagger_1 H_2|\leq|H_1||H_2|$.

Let $x=(h_1,h_2,s)^\top$. Then $V(h_1,h_2,s)= \mathcal{V}x^4$,  where $\mathcal{V}=(v_{ijkl})$ is an 4th order 3 dimensional real symmetric tensor given by
\begin{align}
&v_{1111}=\lambda_1,\ v_{2222}=\lambda_2,\ v_{3333}=\lambda_S, \nonumber\\
&v_{1122}=\frac16(\lambda_3+\lambda_4\rho^2),\ v_{1133}=\frac16\lambda_{S1},\ v_{2233}=\frac16\lambda_{S2}, \nonumber\\
& v_{1233}=-\frac1{12}|\lambda_{S12}|\rho,\ v_{ijkl}=0 \mbox{ for the others}.\nonumber
\end{align}
It follows from Theorem \ref{th:33} that the conditions of strict copositivity of the tensor $\mathcal{V}$ (that is, $V(h_1,h_2,s)= \mathcal{V}x^4>0$) are given by
\begin{align}
&\lambda_1>0,\ \lambda_2>0,\ \lambda_S>0, \nonumber\\
&\lambda_3+\lambda_4\rho^2>0,\ \lambda_{S1}>0,\ \lambda_{S2}>0, \nonumber\\
& -|\lambda_{S12}|\rho+\sqrt{2\lambda_{S1}\lambda_{S2}}>0,\nonumber\\
&\sqrt{\lambda_S\lambda_{S1}\lambda_{S2}}-|\lambda_{S12}|\rho\sqrt{\lambda_{S}}
 +\sqrt{2\lambda_{S}\sqrt{\lambda_{S1}\lambda_{S2}}(-|\lambda_{S12}|\rho+\sqrt{2\lambda_{S1}\lambda_{S2}})}>0.\nonumber
\end{align}
So the above conditions assure the potential $V(h_1,h_2,s)$ symmetric under a $\mathbb{Z}_3$ group is bounded from below. These conditions are different from ones of Kannike \cite{K2016,K2018} and Chen-Huang-Qi \cite{CHQ2018}.

\section*{Acknowledgments} The authors would like to express their sincere thanks to Professor Yimin Wei, Professor Chen Ling, Professor Gaohang Yu for their constructive comments and valuable suggestions.
\bibliographystyle{amsplain}

\end{document}